\numberwithin{equation}{section}
\theoremstyle{definition}
\newtheorem{thm}{Theorem}[section]
\newtheorem{defini}[thm]{Definition}
\newtheorem{prop}[thm]{Proposition}
\newtheorem{lm}[thm]{Lemma}
\newtheorem{remark}[thm]{Remark}
\newtheorem{notat}[thm]{Notation}
\def\rnum#1{\expandafter{\romannumeral #1}}
\def\Rnum#1{\uppercase\expandafter{\romannumeral #1}}
\newcommand{\diam}{\mathop{\mathrm{diam}}\nolimits}
\newcommand{\dist}{\mathop{\mathrm{dist}}\nolimits}
\newcommand{\id}{\mathop{\mathrm{id}}\nolimits}
\begin{document}


\def\R{{\car}}
\def\K{{\ck}}
\def\G{{\cg}}
\def\Re{{\br}}
\def\Na{{\bn}}
\def\Ze{{\bz}}
\def\Qu{{\bq}}
\def\Co{{\bc}}
\def\H{{\ch}}
\def\<{\leq_\Re}
\def\iff{{\it\thinspace iff\thinspace\ }}
\def\ad{{\rm ad}\thinspace}

\def\D{\Delta}\def\d{\delta}\def\l{\lambda}\def\O{\Omega}\def\o{\omega}\def\f{\phi}
\def\s{\sigma}
\def\t{\theta}
\def\a{\alpha}
\def\p{\psi}\def\m{\mu}\def\n{\nu}
\def\L{\Lambda}\def\e{\varepsilon}\def\r{\rho}\def\b{\beta}
\def\g{\gamma}

\def\ca{{\mathcal A}}
\def\cb{{\mathcal B}}
\def\cc{{\mathcal C}}
\def\cd{{\mathcal D}}
\def\ce{{\mathcal E}}
\def\cf{{\mathcal F}}
\def\cg{{\mathcal G}}
\def\ch{{\mathcal H}}
\def\cai{{\mathcal I}}
\def\cj{{\mathcal J}}
\def\ck{{\mathcal K}}
\def\cl{{\mathcal L}}
\def\cam{{\mathcal M}}
\def\cn{{\mathcal N}}
\def\co{{\mathcal O}}
\def\cp{{\mathcal P}}
\def\cq{{\mathcal Q}}
\def\car{{\mathcal R}}
\def\cs{{\mathcal S}}
\def\ct{{\mathcal T}}
\def\cu{{\mathcal U}}
\def\cv{{\mathcal V}}
\def\cw{{\mathcal W}}
\def\cx{{\mathcal X}}
\def\cy{{\mathcal Y}}
\def\cz{{\mathcal Z}}
\def\alg{{\mathfrak g}}

\def\diam{{\textrm{diam}}}
\def\vol{{\textrm{vol}}}
\def\af{{\textrm{Af}}}
\def\aff{{\textrm{Aff}}}
\def\conv{{\textrm{co}}}

\def\ba{{\mathbb A}}
\def\bc{{\mathbb C}}
\def\bd{{\mathbb D}}
\def\bg{{\mathbb G}}
\def\bi{{\mathbb I}}
\def\bj{{\mathbb J}}
\def\bn{{\mathbb N}}
\def\bq{{\mathbb Q}}
\def\br{{\mathbb R}}
\def\bz{{\mathbb Z}}
\def\bt{{\mathbb T}}
\def\fa{{\mathfrak A}}
\def\fw{{\mathfrak W}}

\title{A Gromov-Hausdorff Distance between\\
von Neumann Algebras and an \\
Application to Free Quantum Fields}

\author{D.\ Guido}
\author{N.\ Marotta}
\author{G.\ Morsella}
\author{L.\ Suriano}
\address{Dipartimento di Matematica, Universit\`a Tor Vergata, Roma, Italy}
\email{guido@mat.uniroma2.it, nunzia.math@gmail.com, morsella@mat.uniroma2.it, lu.suriano@gmail.com}

\date{}

\dedicatory{Dedicated to Sergio Doplicher, on the occasion of his $75^{th}$ birthday.}

\begin{abstract}
A distance between von Neumann algebras is introduced, depending on a further norm inducing the $w^*$-topology on bounded sets. Such notion is related both with the Gromov-Hausdorff distance for quantum metric spaces of Rieffel \cite{R4} and with the Effros-Mar\'echal topology \cite{E1,M} on the  von Neumann algebras  acting on a  Hilbert space.
This construction is tested on the local algebras of free quantum fields endowed with norms related with the Buchholz-Wichmann nuclearity condition \cite{BuWi}, showing the continuity of such algebras w.r.t. the mass parameter.
\end{abstract}

\maketitle

\pagenumbering{arabic}

 \setcounter{section}{-1}

\section{Introduction}
In this note we introduce a suitable notion of distance between von Neumann algebras endowed with a further norm inducing the $w^*$-topology on bounded sets, and apply this construction to the local algebras of the free massive quantum field, showing their convergence to the local algebras of the massless free quantum field.

On the one hand, the mentioned notion of distance between von Neumann algebras is a sort of dual version of the quantum Gromov-Hausdorff distance of Rieffel \cite{R4}.
On the other hand, it is clearly related to the Effros-Marechal topology (\cite{E1,M,HW1,HW2}) on the set of von Neumann algebras acting on a given Hilbert space $\ch$.

Let us recall that, according to Rieffel \cite{R2}, a quantum metric space is a $C^*$-algebra, or more generally an order unit space, endowed with a \emph{Lip-seminorm}\footnote{The original terminology of~\cite{R2} is actually \emph{Lip-norm}, but we will reserve it for a slightly different object, see Def.~\ref{def:Lip-spaces} below. }, namely a densely defined seminorm vanishing only on multiples of the identity which induces a distance on the state space compatible with the $w^*$-topology or, equivalently,
whose closed balls modulo scalars are totally bounded w.r.t. the C$^*$-norm
~\cite{R1, R2, R3a}. Alternatively \cite{GI}, one may assign a densely defined norm $L$ on the $C^*$-algebra $\ca$ whose closed balls are norm compact, so that the dual space $\ca^*$ is endowed with a dual norm $L'$ inducing the $w^*$-topology on bounded sets. 

In our setting, we call Lip-von Neumann algebra (LvNA) a von Neumann algebra $M$ endowed with a dual-Lip-norm $L'$, namely a norm inducing the $w^*$-topology on bounded sets. In this way the predual $M_*$ is endowed with a Lip-norm $L$, which is  a densely defined norm whose closed balls are norm compact.
We note that $L'$ gives rise to a Hausdorff distance between $w^*$-compact subsets of $M$. 
Then, following the ideas of Rieffel~\cite{R4}, one may proceed to define a Gromov-Hausdorff pseudo-distance between pairs $(M_1,L'_1)$, $(M_2,L'_2)$ as a distance between the corresponding unit balls, embedded in the direct-sum von Neumann algebra, but, as in his case, distance zero does not imply isomorphic von Neumann algebras. Therefore, following \cite{Kerr, KL}, we replace unit balls with the positive part of the unit balls of $\cam_2(M_i)$, thus getting a distance between isomorphism classes of Lip-von Neumann algebras (cf. Definition \ref{def:dqGH}). 

Let us mention here that, besides the method proposed by Kerr and Li, other proposals have been made in order to get a notion of Gromov-Hausdorff distance more tailored for C$^*$-algebras, cf. e.g. \cite{Li,L1,L2}.

Recall~\cite{E1,M,HW1,HW2} also that, for a separable Hilbert space $\ch$, the Effros-Marechal topology on the set of von Neumann algebras acting on a given Hilbert space $\ch$ may be metrized as follows: one chooses a distance on $\cb(\ch)$ which metrizes the $w^*$-topology on bounded subsets and  considers the corresponding Hausdorff  distance on $w^*$-compact sets. Then  the distance between two von Neumann algebras on $\ch$ may be defined as the Hausdorff distance between their unit balls. In this sense our construction is a local version of the Effros-Marechal topology, since it metrizes the $w^*$-compact sets of a given von Neumann algebra, instead of all the $w^*$-compact sets in $\cb(\ch)$, and our distance between isomorphism classes of LvNA is at the same time a Gromov-Hausdorff version of the Effros-Marechal topology.

In algebraic quantum field theory, namely the description of relativistic quantum physics by means of operator algebras, various notions of compactness or, more strongly, nuclearity properties have been considered. We focus here on the Buchholz-Wichmann nuclearity condition \cite{BuWi}. It provides natural, physically meaningful dual-Lip-norms for the algebras of local observables, which can be used to study various notions of distance or convergence. In this paper we use them to prove that the local algebras of the free quantum field are continuous in the mass parameter and, in particular, that, for any bounded region $\co$, the algebras $\ca_m(\co)$ converge to $\ca_0(\co)$ when $m$ goes to zero.

We conclude this introduction mentioning that the idea of some kind of topological convergence for the algebras of local observables in quantum field theory originally came from early discussions of the first named author with R. Verch and concerned the scaling limit~\cite{BV1}, however with the expectation that Rieffel's quantum Gromov-Hausdorff distance between $C^*$-algebras could be used. Only later, during discussions of the first named author with H. Bostelmann, the idea rose that the notion had to be dualized and adapted to von Neumann algebras. The related technical work is contained in the PhD thesis of the last named author, under the supervision of the first named author. Finally, in the master thesis of the second named author, under the supervision of the first and third author, the convergence of the free local algebras w.r.t. the mass parameter was studied.

The content of this paper was reported by the third author in the Marcel Grossman meeting in Rome, July 2015.

\section{Lip-von Neumann Algebras}
\label{ch:LVNA}

\pagestyle{plain}

In this section, we will introduce the notion of Lip-von Neumann Algebra\footnote{In what follows, we will consider concrete von Neumann algebras, but the same definitions may refer also to abstract $W^*$-algebras, as long as we do not make any reference to the representing Hilbert space.}.
The reason why we use the term ``Lip'' (which stands for ``Lipschitz'') will be clear in a while. The leading idea is in fact to ``dualize'' the notion of quantum metric spaces given by Rieffel in \cite{R4}. One may reformulate  Rieffel's approach for $C^*$-algebras as follows: assign a further norm $L$ on the $C^*$-algebra $\ca$ in such a way that the dual norm $L'$ metrizes the $w^*$-topology on bounded subsets of $\ca^*$.
If instead one starts with a von Neumann algebra, which is canonically a dual space, it becomes quite natural to assign a norm $L$ on its pre-dual $M_*$ in such a way that the corresponding dual norm $L'$ on $M$ metrizes the $w^*$-topology on bounded subsets of $M$.



Before translating these ideas into a definition, we briefly recall some basic facts about the notion of Lip-space introduced in \cite{GI}.

\begin{defini}\label{def:Lip-spaces}
{\em We call} Lip-space (LS) {\em a triple $(X, \| \cdot \|, L)$, where:}
\begin{itemize}
\item[(i)] {\em $(X,\|\cdot\|)$ is a Banach space,}
\item[(ii)] {\em $L: X \to [0, +\infty]$ is finite on a dense vector subspace $\cl$, where it is a norm,}
\item[(iii)] {\em the unit ball w.r.t. $L$, namely $\cl_1 := \{ x \in X: L(x) \leq 1\}$, is compact w.r.t. the Banach norm of $X$.}
\end{itemize}
{\em We call \emph{Lip-norm} a norm $L$ satisfying properties \emph{(ii)} and \emph{(iii)} above.} 
\end{defini}

We then call {\it radius} of the Lip-space $(X, \| \cdot \|, L)$, and denote it by $R$, the maximum value of the (Banach) norm on $\cl_1$, namely
\begin{equation}
R := \max \{ \| x \|: x \in \cl_1 \}.
\end{equation}
It then follows that
\begin{equation}
\| x \| \leq R L(x), \quad \forall x \in X.
\end{equation}

For the reader convenience, we also recall Prop.~2.3 of \cite{GI}.

\begin{prop}\label{prop:GI}
{\em Let $(X, \| \cdot \|, L)$ be a Lip-space. Then, the norm $L'$ on the Banach dual $X^*$ given by
\begin{equation}\label{DualLipNorm}
L'(\xi) := \sup\left\{| \langle \xi, x \rangle |\,:\,x \in X,\,L(x) \leq 1 \right\}
\end{equation}
induces the $w^*$-topology on the bounded subsets of $X^*$, and the radius $R$ is also equal to the radius of the unit ball of $(X^*, \| \cdot \|)$ w.r.t. the $L'$-norm.}
\end{prop}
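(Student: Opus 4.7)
The plan is to split the argument into two parts: first, that $L'$ metrizes the $w^*$-topology on norm-bounded subsets of $X^*$, and second, that the radius of the $\|\cdot\|$-unit ball of $X^*$ measured by $L'$ equals $R$. As a preliminary step, I would verify that $L'$ is a genuine finite norm on all of $X^*$: since $\cl_1\subseteq\{x\in X:\|x\|\leq R\}$, the supremum in \eqref{DualLipNorm} is bounded by $R\|\xi\|$, so $L'(\xi)<\infty$ for every $\xi\in X^*$; separation of points follows because $\cl$ is dense in $X$, so $L'(\xi)=0$ forces $\langle\xi,x\rangle=0$ for all $x\in\cl$ (by scaling within $\cl_1$), hence for all $x\in X$ by continuity of $\xi$ and density.

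For the topological equivalence, the key reformulation is that $L'$-convergence amounts to uniform convergence of the restrictions $\xi|_{\cl_1}$ on the \emph{compact} set $\cl_1$. Fix a $\|\cdot\|$-bounded set $B=\{\xi\in X^*:\|\xi\|\leq K\}$. The easy direction ($L'\Rightarrow w^*$ on $B$) follows from the inequality $|\langle\xi,x\rangle|\leq L(x)\,L'(\xi)$ valid for $x\in\cl$: this yields pointwise convergence on $\cl$, and density of $\cl$ in $X$ together with the uniform bound $K$ upgrades this to pointwise convergence on $X$, i.e.\ $w^*$-convergence. For the converse direction ($w^*\Rightarrow L'$ on $B$), I would exploit compactness of $\cl_1$ via a standard equicontinuity argument: given $\varepsilon>0$, cover $\cl_1$ by finitely many $\|\cdot\|$-balls of centers $x_1,\ldots,x_n\in\cl_1$ and radius $\varepsilon/(4K)$, and combine $w^*$-convergence at these finitely many centers with the uniform bound to estimate
\[
\sup_{x\in\cl_1}|\langle\xi_\alpha-\xi,x\rangle|\leq\max_{i}|\langle\xi_\alpha-\xi,x_i\rangle|+2K\cdot\frac{\varepsilon}{4K}<\varepsilon
\]
for $\alpha$ sufficiently large, giving $L'(\xi_\alpha-\xi)<\varepsilon$.

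The radius identity is then straightforward. The inequality $L'(\xi)\leq R\|\xi\|$ derived above yields $\sup\{L'(\xi):\|\xi\|\leq 1\}\leq R$. For the reverse bound, the $\|\cdot\|$-compactness of $\cl_1$ ensures that the maximum in the definition of $R$ is attained at some $x_0\in\cl_1$ with $\|x_0\|=R$, and Hahn-Banach furnishes $\xi_0\in X^*$ with $\|\xi_0\|=1$ and $\langle\xi_0,x_0\rangle=R$, whence $L'(\xi_0)\geq|\langle\xi_0,x_0\rangle|=R$, showing that the supremum is attained and equals $R$. The only genuinely nontrivial step is the $w^*\Rightarrow L'$ direction above, where compactness of $\cl_1$ is essentially used to convert pointwise to uniform convergence; everything else is bookkeeping with the dual-norm inequality plus a single Hahn-Banach application.
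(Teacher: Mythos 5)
Your proof is correct. Note that the paper does not actually supply an argument for this statement: it is quoted, for the reader's convenience, from Prop.~2.3 of \cite{GI}, so there is no in-text proof to compare yours against. Your route is the natural one and every step holds up: the bound $L'(\xi)\leq R\|\xi\|$ gives finiteness, density of $\cl$ gives separation of points, the reformulation of $L'(\xi_\alpha-\xi)$ as $\sup_{x\in\cl_1}|\langle\xi_\alpha-\xi,x\rangle|$ together with a finite $\varepsilon/(4K)$-net of the compact set $\cl_1$ and the uniform bound $\|\xi_\alpha-\xi\|\leq 2K$ upgrades $w^*$-convergence to $L'$-convergence, and Hahn--Banach applied at a point $x_0\in\cl_1$ realizing $\|x_0\|=R$ shows the radius is attained. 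This compactness/equicontinuity mechanism is the same one the paper uses in the opposite direction in the proof of Proposition~\ref{DLSiff}(ii), where Ascoli--Arzel\`a is invoked explicitly, so your argument fits naturally into the paper's framework.
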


Our aim is to give an intrinsic characterization of the norm $L'$ on $X^*$. Given a seminorm $L$ on a subspace $\cl$ of a Banach space $X$, we can of course extend it to $X$ by setting it equal to $\infty$ on $X \setminus \cl$. For simplicity, we will still refer to such an extension as a seminorm on $X$. Moreover, if $L'$ is a seminorm (in this sense) on $X^*$, we can associate to it a seminorm $L$ on $X$ by a formula which is ``dual'' to~\eqref{DualLipNorm}:
\begin{equation}\label{PredualLipNorm}
L(x) := \sup\{| \langle \xi, x \rangle |\,:\,\xi \in X^*,\,L'(\xi) \leq 1\}.
\end{equation}

\begin{lm}\label{lm:seminormduality}
{\em Let $X$ be a Banach space, $X^*$ its Banach dual, $L$ a seminorm on $X$ and $L'$ a seminorm on $X^*$.  Then $C := \{x \in X\,:\,L(x) \leq 1\}$ is norm closed and~\eqref{DualLipNorm} is satisfied if and only if $Q := \{\xi \in X^*\,:\,L'(\xi) \leq 1\}$ is norm closed and~\eqref{PredualLipNorm} is satisfied.}
\end{lm}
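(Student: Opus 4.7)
The plan is to recognize Lemma~\ref{lm:seminormduality} as an instance of the bipolar theorem, translating the seminorm duality~\eqref{DualLipNorm}--\eqref{PredualLipNorm} into an equality of polar sets. The starting observation is that for any seminorm $L$ on $X$ (possibly taking the value $+\infty$), the unit ball $C=\{L\leq 1\}$ is absolutely convex and $L$ coincides with the Minkowski gauge of $C$, so the pair $(L,C)$ determine each other; an analogous statement holds for $(L',Q)$ on $X^*$. Under this correspondence, formula~\eqref{DualLipNorm} is equivalent to the absolute polar identity $Q=C^{\circ}$, where $C^{\circ}:=\{\xi\in X^*:|\langle\xi,x\rangle|\leq 1 \text{ for all } x\in C\}$, and dually~\eqref{PredualLipNorm} is equivalent to $C=Q^{\circ}$.

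For the implication $(\Rightarrow)$, suppose $C$ is norm closed and $Q=C^{\circ}$. Then $Q$, being the intersection of the $w^*$-closed slabs $\{\xi:|\langle\xi,x\rangle|\leq 1\}$ indexed by $x\in C$, is itself $w^*$-closed, hence a fortiori norm closed. Since $C$ is absolutely convex and norm closed, Mazur's theorem identifies this closure with the weak closure, so the bipolar theorem in $X$ yields $C^{\circ\circ}=C$, that is $Q^{\circ}=C$, which is exactly~\eqref{PredualLipNorm}.

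The converse $(\Leftarrow)$ is formally symmetric: assuming $C=Q^{\circ}$, the set $C$ is an intersection of weakly closed slabs, hence weakly closed and therefore norm closed. For~\eqref{DualLipNorm} one must show $Q=(Q^{\circ})^{\circ}$; here the bipolar theorem applied to $Q\subseteq X^*$ gives $(Q^{\circ})^{\circ}=\overline{Q}^{\,w^*}$.

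The main obstacle is precisely this last step: the bipolar theorem in the dual yields only the $w^*$-closure of $Q$, so the backward implication really requires $w^*$-closedness of $Q$ rather than norm closedness alone. In the paper's intended setting, where $L'$ sits on a von Neumann algebra $M=(M_*)^*$ with the $w^*$-topology $\sigma(M,M_*)$, the natural reading of ``closed'' on the $X^*$-side is $\sigma(X^*,X)$-closed, equivalent to $L'$ being $\sigma(X^*,X)$-lower semicontinuous; this is the condition I would verify (or assume implicitly) in order to close the loop. Under that reading the two directions of the proof become completely symmetric applications of the bipolar theorem, and the final verification that the polar-set identities entail~\eqref{DualLipNorm} and~\eqref{PredualLipNorm} reduces to the observation that the gauge of $C$ is $L$ and the gauge of $Q$ is $L'$.
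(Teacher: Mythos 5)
Your approach is the paper's approach: translate \eqref{DualLipNorm} into the polar identity $Q=C^{\circ}$ and \eqref{PredualLipNorm} into $C={}^{\circ}Q$, and apply the bipolar theorem for the dual pair $\langle X,X^*\rangle$. Your forward implication is exactly the paper's proof: $Q=C^{\circ}$ is $w^*$-closed, hence norm closed, and a norm-closed absolutely convex $C$ is weakly closed (Mazur), so $C^{\circ\circ}=C$ gives \eqref{PredualLipNorm}.

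The obstacle you isolate in the converse is genuine, and it is present in the paper's own proof, which disposes of that direction with ``completely analogous''. The bipolar theorem for $Q\subseteq X^*$ relative to the pairing with $X$ yields $({}^{\circ}Q)^{\circ}=\overline{Q}^{\,\sigma(X^*,X)}$, and norm closedness of a bounded absolutely convex set in a dual space does not imply $w^*$-closedness. For a concrete failure, take $X=c_0$, $X^*=\ell^1$, $\phi=(1,1,\dots)\in\ell^\infty$ and $L'(\xi):=\max\bigl(\|\xi\|_1,\,2|\phi(\xi)|\bigr)$: then $L'$ is norm continuous, so $Q$ is norm closed, but $\tfrac34 e_1$ is the $w^*$-limit of $\tfrac34 e_1-\tfrac14 e_n\in Q$ while $L'(\tfrac34 e_1)=\tfrac32>1$, so $Q$ is not $w^*$-closed; defining $L$ by \eqref{PredualLipNorm}, the right-hand side of \eqref{DualLipNorm} at $\tfrac34 e_1$ is at most $1$, so \eqref{DualLipNorm} fails. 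Thus the backward implication as literally stated is false, and your proposed repair --- requiring $Q$ to be $\sigma(X^*,X)$-closed, equivalently $L'$ to be $w^*$-lower semicontinuous --- is the correct hypothesis under which the two directions become genuinely symmetric. (The weaker conclusion that $Q$ norm closed together with \eqref{PredualLipNorm} implies $C$ norm closed does survive, since $C={}^{\circ}Q$ is weakly closed in any case; but the deduction of \eqref{DualLipNorm} in the converse, which is later invoked in the proof of Prop.~\ref{DLSiff}, needs the strengthened hypothesis or a separate verification of $w^*$-closedness there.)
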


\begin{proof}
The set $C$ is convex and balanced, and if~\eqref{DualLipNorm} holds its polar set $C^\circ$ satisfies
\begin{align*}
C^\circ&=\{\xi\in X^*:|\langle\xi,x\rangle|\leq1,L(x)\leq1\}\\
&=\{\xi\in X^*:L'(\xi)\leq1\} = Q.
\end{align*}
Therefore $Q$, as a polar set, is norm closed, and if $C$ is norm closed too, by the Bipolar Theorem~\cite{Ed} $Q^\circ = C^{\circ\circ} = C$, which entails~\eqref{PredualLipNorm}. The proof of the converse statement is completely analogous.
\end{proof}

We can now give the notion of dual Lip-space.

\begin{defini}\label{DLS}
{\em A} dual Lip-space {\em is a Banach space $Y$ endowed with a norm $L'$ which 
induces a compact topology on the closed unit ball. We will call such a norm a dual-Lip-norm.}
\end{defini}


\begin{prop}\label{DLSiff}
{\em 
Let $X$ be a Banach space, $X^*$ its Banach dual.
\begin{itemize}
\item[(i)] Given a Lip-norm $L$ on $X$, formula $\eqref{DualLipNorm}$ defines a dual-Lip-norm on $X^*$.
\item[(ii)] Given a dual-Lip-norm $L'$ on $X^*$, formula \eqref{PredualLipNorm}
defines a Lip-norm on $X$.
\item[(iii)] The constructions in (i) and (ii) are inverse each other, namely $L$ on $X$ produces $L'$ on $X^*$ iff $L'$ on $X^*$ produces $L$ on $X$.
\item[(iv)] A dual-Lip-norm $L'$ on $X^*$ induces the $w^*$-topology on the bounded subsets.
\end{itemize}
}
\end{prop}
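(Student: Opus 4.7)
My strategy is to handle the four parts in order, since each builds on the previous. Part (i) is almost a repackaging of Proposition \ref{prop:GI}: separation of points of $X^*$ for $L'$ follows from the density of $\mathcal{L}$ in $X$, while $\|x\| \leq R L(x)$ gives $L' \leq R\|\cdot\|$, so $L'$ is a well-defined norm. Proposition \ref{prop:GI} then says that $L'$ induces the $w^*$-topology on bounded subsets of $X^*$; applying this to the Banach unit ball $B_{X^*}$, which is $w^*$-compact by Banach--Alaoglu, yields $L'$-compactness of $B_{X^*}$, so $L'$ is a dual-Lip-norm.

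Part (ii) is the substantive step. The starting observation is that $L'$-compactness of $B_{X^*}$ makes it $L'$-bounded, so $L' \leq C\|\cdot\|$ for some $C > 0$. In particular $L'$ is $\|\cdot\|$-continuous and $Q := \{L' \leq 1\}$ is $\|\cdot\|$-closed, so Lemma \ref{lm:seminormduality} applies and yields the $\|\cdot\|$-closedness of $C_0 := \{L \leq 1\}$ together with the reverse duality formula \eqref{DualLipNorm}. The inclusion $Q \supset C^{-1} B_{X^*}$ gives $L \geq C^{-1}\|\cdot\|$, so $L$ is an honest norm on $\mathcal{L} := \{L < \infty\}$. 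Density of $\mathcal{L}$ in $X$ I would prove by Hahn--Banach: any $\xi_0$ in the annihilator $\mathcal{L}^\perp$ vanishes in particular on $C_0 \subset \mathcal{L}$, and the recovered formula \eqref{DualLipNorm} then forces $L'(\xi_0) = 0$, whence $\xi_0 = 0$ since $L'$ is a norm.

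The delicate remaining point, and the one I expect to be the real obstacle, is the norm compactness of $C_0$. The plan is a Schauder-adjoint argument. The hypothesis that $B_{X^*}$ is $L'$-compact is exactly the statement that the identity map $\iota:(X^*,\|\cdot\|)\to(X^*,L')$ is a compact operator; passing to the $L'$-completion of $(X^*,L')$ and applying Schauder's theorem, the Banach adjoint $\iota^*:(X^*,L')^* \to X^{**}$ is a compact operator, and under canonical identifications it is nothing but the inclusion of $L'$-continuous linear functionals on $X^*$ into $X^{**}$. The assignment $x \mapsto (\xi \mapsto \langle\xi,x\rangle)$ embeds $\mathcal{L}$ isometrically into $(X^*,L')^*$, with the operator norm there coinciding with $L$. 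Under these identifications $C_0 = B_{(X^*,L')^*} \cap X$ inside $X^{**}$, and compactness of $\iota^*$ makes this relatively compact in $X^{**}$; combining with the $\|\cdot\|$-closedness of $C_0$ in $X$ (hence in $X^{**}$) upgrades relative compactness to full compactness, giving the desired compact unit ball.

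Parts (iii) and (iv) are then essentially bookkeeping. For (iii), Lemma \ref{lm:seminormduality} provides the inverse relation between the two constructions once both unit balls are known to be norm-closed; this closedness is free from (i) (a compact set is closed) and from the opening lines of (ii). For (iv), given a dual-Lip-norm $L'$, parts (ii) and (iii) identify it as the dual of a Lip-norm $L$ on $X$, and Proposition \ref{prop:GI} then yields directly that $L'$ induces the $w^*$-topology on bounded subsets of $X^*$. All the geometric content of the proposition is therefore concentrated in the Schauder step of (ii); everything else reduces to combining Lemma \ref{lm:seminormduality} and Proposition \ref{prop:GI}.
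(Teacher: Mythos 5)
Your proof is correct, and its overall skeleton coincides with the paper's: parts (i), (iii) and (iv) are handled exactly as in the text (Prop.~\ref{prop:GI} plus Banach--Alaoglu for (i), Lemma~\ref{lm:seminormduality} together with the closedness of the two unit balls for (iii), and reduction to Prop.~\ref{prop:GI} via (ii)--(iii) for (iv)), as are the opening of (ii) ($L'\leq C\|\cdot\|$ from $L'$-compactness of $X^*_1$, hence $Q$ norm closed, hence the lemma applies) and the Hahn--Banach density argument for $\cl$. The one step where you genuinely diverge is the norm compactness of $C=\{L\leq 1\}$: the paper views $C$, via the canonical embedding $X\hookrightarrow X^{**}$, as a uniformly bounded, $L'$-equicontinuous family of functions on the $L'$-compact set $X^*_1$ and applies Ascoli--Arzel\`a directly, the point being that the sup-norm over $X^*_1$ restores the Banach norm of $X$; you instead read the hypothesis as compactness of the identity $(X^*,\|\cdot\|)\to(X^*,L')$, invoke Schauder's theorem for the adjoint into $X^{**}$, and identify $C$ as $B_{(X^*,L')^*}\cap X$, upgrading relative compactness to compactness via the norm closedness of $C$ already obtained from Lemma~\ref{lm:seminormduality}. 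The two arguments are of essentially the same depth (Schauder's theorem is itself usually proved by the Ascoli--Arzel\`a device the paper uses), but yours packages the equicontinuity bookkeeping into a named theorem at the cost of a detour through the $L'$-completion and the identification of $(X^*,L')^*$ inside $X^{**}$, while the paper's is more self-contained and makes explicit why the sup-norm on $X^*_1$ agrees with $\|\cdot\|_X$ -- a point you use implicitly when you say the embedding of $\cl$ into $(X^*,L')^*$ lands isometrically for the bidual norm. Both routes are sound.
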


\begin{proof}
We will use the notations $C := \{x \in X\,:\,L(x) \leq 1\}$, $Q := \{\xi \in X^*\,:\,L'(\xi) \leq 1\}$ as in Lemma~\ref{lm:seminormduality}.

(i) It follows by Proposition \ref{prop:GI}.

(ii) Since $L'$ is a norm (everywhere finite on $X^*$), it is clear that $L(x) = 0$ only if $x= 0$. The normic unit ball $X^*_1:=\{\xi \in X^*: \| \xi \| \leq 1\}$ is $L'$-compact by hypothesis, hence there exists $R>0$ such that $L'(\xi)\leq R\|\xi\|$, which implies  that $Q$ is  norm closed. Therefore by Lemma~\ref{lm:seminormduality}, $C$ is norm closed. We now prove that it is norm compact.
Identifying $X$ with its isometric image in the bidual $X^{\ast \ast}$ of $X$, we may consider the set $C$ as a family of 
functions on the $L'$-compact set $X^*_1 \subseteq R\cdot Q$. Since $| \langle \xi, x \rangle | \leq L(x) L'(\xi)$, we see that $C$ is  uniformly bounded by $R$ on $X^*_1$. Moreover, as $| \langle \xi_1, x \rangle - \langle \xi_2, x \rangle | \leq L'(\xi_1 - \xi_2)$ on $C$, 
then  $C$ is also $L'$-equicontinuous. Therefore, by the Ascoli-Arzel\`a Theorem (see, {\em e.g.}, \cite{Ru}), $C$ is compact in the sup-norm $\| \cdot \|_{\infty}$, which coincides on it with the original (Banach) norm of $X$. It remains to check that $\cl = \{x \in X\,:\,L(x)<\infty\}$ is dense. If not, by the Hahn-Banach Theorem one could find $\xi \in X^*\setminus\{0\}$ vanishing on $\cl$, but since~\eqref{DualLipNorm} holds by Lemma~\ref{lm:seminormduality}, this is incompatible with the norm property of $L'$.

(iii) The result follows by Lemma~\ref{lm:seminormduality}: starting from $(X,L)$, by hypothesis $C$ is norm compact, hence norm closed, therefore~\eqref{PredualLipNorm} holds; starting from $(X^*,L')$,  $Q$ is  norm closed as observed in the proof of (ii), and then one gets~\eqref{DualLipNorm}.

(iv) By points (ii) and (iii), any $L'$ is generated by an $L$ on $X$, therefore the thesis follows by Proposition \ref{prop:GI}.
\end{proof}

We now specialize to the case in which $X^*$ is a von Neumann algebra.

\begin{defini}\label{def:LipIso}
{\em A Lip-von Neumann algebra (LvNA) is a von Neumann algebra $M$ endowed with a further norm $L'$ which 
metrizes the $w^*$-topology on bounded subsets.

The LvNA's $(M, L'_M)$ and $(N, L'_N)$ are said to be} isomorphic (as Lip-von Neumann algebras) {\em if there is an isometric ${}^*$-isomorphism $\varphi: M \to N$, such that}
\begin{equation}
L'_N (\varphi(x)) = L'_M(x), \quad for \; any \quad x \in M.
\end{equation}
\end{defini}

By the previous discussion it turns out that a Lip-von Neumann algebra is a von Neumann algebra $M$ which is also a dual Lip-space, or equivalently, it is a vNA whose predual $M_{\ast}$ is a Lip-space. Also, to assign a dual-Lip-norm on $M$ is equivalent to assign a Lip-norm on its predual $M_{\ast}$. Finally, the request that $L'$ metrizes the $w^*$-topology on bounded subsets can be replaced by the request that $M_1 = \{x \in M\,:\,\|x\|\leq 1\}$ is $L'$-compact.

\section{The Dual Quantum Gromov-Hausdorff Distance ${\rm dist}_{qGH^*}$.}
\label{ch:results}

\pagestyle{plain}

\subsection{The Effros-Mar\'echal Topology}\label{subsec:EMtop}

Let $\ch$ be a (fixed) Hilbert space, and let vN($\ch$) be the set of all von Neumann subalgebras of $\cb(\ch)$. We can endow the space vN($\ch$) with a certain natural topological structure. The definition of this topology goes back to the works of Effros \cite{E1} and Mar\'echal \cite{M}, and has been further studied by Haagerup and Winslow in the papers \cite{HW1,HW2}. The original definition of Mar\'echal is the following:

\begin{defini}\label{def:EMtop}
{\em The} Effros-Mar\'echal topology {\em is the weakest topology on vN($\ch$) in which the maps
$$
\mbox{vN}(\ch) \ni M \mapsto \| \varphi |_M \|
$$
are continuous on vN($\ch$) for every $\varphi \in \cb(\ch)_{\ast}$.}
\end{defini}

We now specialize to the separable case. If $\ch$ is separable, then vN($\ch$) is a Polish space in this topology \cite{M}, {\em i.e.}, the Effros-Mar\'echal topology is metrizable, second countable and complete. In order to construct a metric on vN($\ch$) which induces the Effros-Mar\'echal topology, one takes any distance $\r$ on $B(\ch)$ inducing the \emph{wo}-topology on bounded subsets of $B(\ch)$ (which coincides with the $\s$-weak or $w^*$-topology on bounded sets).
The corresponding Hausdorff distance between unit balls of von Neumann algebras in vN($\ch$) will be then a metric for the Effros-Marechal topology (this is a consequence of \cite{E2}), that is, for $M_1, M_2 \in \mathrm{vN}(\ch)$, $$\dist_{EM} (M_1, M_2) := \dist_H^{\r} ((M_1)_1, (M_2)_1), $$
where we recall that, for $C_1,C_2$ closed subsets of a compact metric space $(X,\rho)$, their Hausdorff distance (see e.g. \cite{Bu}) is given by
\begin{equation}\label{Hmetric}
\dist^\rho_H(C_1,C_2)= \max \Big( \sup_{x \in C_1} \inf_{y \in C_2} \r(x, y) , \sup_{y \in C_2}  \inf_{x \in C_1} \r(x, y) \Big).
\end{equation}

\subsection{The Distance ${\rm dist}_{qGH^*}$}

As seen in the previous paragraph, it is possible to define a Hausdorff-like distance between two von Neumann subalgebras of $\cb(\ch)$ for a separable Hilbert space $\ch$,  provided one chooses a norm on $\cb(\ch)$ inducing the $w^*$-topology on bounded sets. 
As in the case of ordinary compact metric spaces, one may proceed from the Hausdorff distance between closed subsets of a given compact metric space to the Gromov-Hausdorff distance, which is a pseudo-distance between compact metric spaces considered \emph{per se}. This pseudo-distance then becomes a true distance on the space of isometry equivalence classes of compact metric spaces.

Let $M$, $N$ be two Lip-von Neumann algebras with dual-Lip-norms $L'_M$, $L'_N$. We want to introduce a Gromov-Hausdorff-type notion of distance between them. In order to have isomorphic Lip-von Neumann algebras (according to Def.~\ref{def:LipIso}) whenever they are at distance zero 
we follow Kerr~\cite{Kerr} and consider not only the original algebras $M$ and $N$, but also the $2 \times 2$-matrix algebras $\mathcal{M}_2(M)$ and $\mathcal{M}_2(N)$ with entries in $M$ and $N$, respectively. We introduce some notation.

\begin{notat}
From now on, we will write $L$ instead of $L'$ when dealing with dual-Lip-norms on von Neumann algebras. Moreover, given a LvNA $(M,L_M)$, we still denote by $L_{M}$ the dual-Lip-norm on $\mathcal{M}_2(M)$ induced by that on $M$ as follows:
\begin{equation}\label{lip2by2}
L _{M} ((a_{ij}))  := \max_{i, j = 1, 2} ( L_M (a_{ij})), \qquad (a_{ij}) \in \mathcal{M}_2(M).
\end{equation}
Notice that the extended $L_{M}$ gives back the original Lip-norm, when restricted to the copy of $M$ diagonally embedded in $\mathcal{M}_2(M)$.
\end{notat}

\begin{lm}\label{lm:2by2}
\emph{If $(M, L_M)$ is a Lip-von Neumann algebra, then $(\mathcal{M}_2(M), L_M)$ is a LvNA, too. Moreover, they have the same radius.}
\end{lm}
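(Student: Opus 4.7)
The plan is to verify for $(\mathcal{M}_2(M), L_M)$ the three defining properties of a Lip-von Neumann algebra set out in Definitions~\ref{DLS} and~\ref{def:LipIso}: that $L_M$ is a norm on $\mathcal{M}_2(M)$, that it metrizes the $w^*$-topology on Banach-bounded subsets, and that the closed Banach unit ball is $L_M$-compact. Throughout I will exploit two standard identifications: $\mathcal{M}_2(M) \cong M \otimes \mathcal{M}_2(\bc)$ as vNAs, and $\mathcal{M}_2(M)_\ast \cong \mathcal{M}_2(M_\ast)$ with entrywise duality pairing, so that $w^*$-convergence in $\mathcal{M}_2(M)$ is precisely entrywise $w^*$-convergence of matrix coefficients.

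First, the norm properties of $L_M((a_{ij})) = \max_{i,j} L_M(a_{ij})$ are essentially formal: nonnegativity, absolute homogeneity and subadditivity descend from the corresponding properties of $L_M$ on $M$ via elementary properties of $\max$, and strict positivity is immediate since $\max L_M(a_{ij}) = 0$ forces each entry to vanish. For the topological property I will observe that $C^*$-boundedness of a net in $\mathcal{M}_2(M)$ forces boundedness of each of its matrix entries in $M$; on such subsets, $L_M$-convergence in $\mathcal{M}_2(M)$ is by definition entrywise $L_M$-convergence, which by the hypothesis on $(M, L_M)$ coincides with entrywise $w^*$-convergence, and hence (via the entrywise predual pairing) with $w^*$-convergence of matrices. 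In particular the $C^*$-norm unit ball of $\mathcal{M}_2(M)$, being $w^*$-compact by standard vNA theory, is $L_M$-compact, completing the verification that $L_M$ is a dual-Lip-norm.

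Finally, the radius: one direction is the easy observation that the radius of $\mathcal{M}_2(M)$ is at least $R$, since the diagonal embedding $M \hookrightarrow \mathcal{M}_2(M)$, $x \mapsto \mathrm{diag}(x,x)$, is a $C^*$-isometric $*$-monomorphism preserving $L_M$ by the very definition of the extension, so any $x \in M$ realizing $\|x\| = R$ with $L_M(x) = 1$ yields a diagonal matrix realizing the same pair in $\mathcal{M}_2(M)$. The reverse inequality, which asserts that $L_M(A) \leq 1$ forces $\|A\|_{\mathcal{M}_2(M)} \leq R$, is the point I expect to be the main obstacle: a naive entrywise estimate gives only $\|A\|_{\mathcal{M}_2(M)} \leq 2R$, so one needs either to dualize via Propositions~\ref{prop:GI}--\ref{DLSiff} after identifying the Lip-norm on the predual $\mathcal{M}_2(M_\ast)$ explicitly, or to reduce to the diagonal case by exploiting the $*$-algebraic structure (e.g.\ replacing $A$ by $\sqrt{A^*A}$, whose spectral resolution lies in the diagonally embedded subalgebra up to unitary conjugation). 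Once this entrywise-to-operator-norm step is carried out, the radius equality follows directly.
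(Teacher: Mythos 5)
Your verification that $(\mathcal{M}_2(M), L_M)$ is again a LvNA is fine and is essentially the paper's own argument: entrywise $w^*$-convergence equals $w^*$-convergence of matrices on bounded sets, so the lifted norm induces the $w^*$-topology there, and the unit ball of $\mathcal{M}_2(M)$ is $w^*$-compact. The problem is in the radius part, where you are working with the wrong quantity. For a Lip-space $(X,\|\cdot\|,L)$ the radius is $\max\{\|x\|: L(x)\le 1\}$, and by Prop.~\ref{prop:GI} this equals $\max\{L'(\xi):\|\xi\|\le 1\}$ on the dual. For a LvNA $(M,L_M)$ the norm $L_M$ plays the role of the \emph{dual}-Lip-norm $L'$ on $M=(M_*)^*$, so the radius is $R_M=\max\{L_M(x): x\in M_1\}$, the maximum of $L_M$ over the $C^*$-unit ball --- not $\sup\{\|x\|: L_M(x)\le 1\}$ as you use. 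With the correct definition both inequalities are immediate: a matrix $\mathrm{diag}(x_0,0)$ with $x_0\in M_1$ realizing the maximum gives $R_{\mathcal{M}_2(M)}\ge R_M$, and since $\|A\|\le 1$ forces $\|a_{ij}\|\le 1$ for every entry, one gets $L_M(A)=\max_{ij}L_M(a_{ij})\le R_M$ and hence $R_{\mathcal{M}_2(M)}\le R_M$. The ``main obstacle'' you flag evaporates once the two mutually dual radii are correctly identified.

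Worse, under your reading the equality you are trying to prove is actually false, so neither of your proposed workarounds (dualizing via Prop.~\ref{prop:GI}, or reducing to $\sqrt{A^*A}$) can succeed. Take $M=\mathbb{C}$ with $L_M(z)=|z|$: then $\sup\{\|A\|:\max_{ij}|a_{ij}|\le 1\}=2$, attained at the all-ones matrix, whereas the corresponding quantity for $M$ is $1$. So the gap is not a missing operator-norm estimate but a misidentification of the definition of the radius; the naive entrywise bound you dismiss is in fact sharp evidence that your version of the statement cannot hold.
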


\begin{proof}
Indeed, $\mathcal{M}_2(M_1) \cong M_1 \otimes F_2$, where $F_2$ is the type $I_2$ factor, and so, if $L_M$ induces  the $w^*$-topology on the unit ball $M_1$ of $M$, then its ``lift'' to $\mathcal{M}_2(M)$ will induce the same topology on $\mathcal{M}_2(M_1)$. Since $\mathcal{M}_2(M)_1$ is a $w^*$-closed subset of $\mathcal{M}_2(M_1)$, the dual Lip-norm property follows.

As for the radii, as $R_M = \max_{x \in M_1} L_M(x)$, and by $w^*$-compactness the maximum is realized by some element $x_0 \in M_1$, then
$$
R_{\cam_2(M)} = \max_{(x_{i j}) \in \cam_2(M)_1} L_M((x_{i j})) \geq L_M \left( \left(
\begin{array}{cc}
x_0 & 0 \\
0 & 0
\end{array}
\right) \right) = R_M;
$$
on the other hand, as $(x_{i j}) \in \cam_2(M)_1$ implies $x_{i j} \in M_1$, $i, j = 1, 2$, one has
$$
L_M((x_{i j})) = \max_{i j} L_M(x_{i j}) \leq R_M,
$$
hence, $R_{\cam_2(M)} \leq R_M$, and the claim follows. 
\end{proof}

We now introduce our notion of distance between Lip-von Neumann algebras.

\begin{defini}\label{def:dqGH}
\emph{Let $(M, L_M)$ and $(N, L_N)$ be Lip-von Neumann algebras, and denote by $\mathcal{L} ((M,L_M), (N,L_N))$ the set of all the seminorms $L = L_{M \oplus N}$ on the direct sum $M \oplus N$, s.t. $L |_{M \oplus \{ 0 \}} = L_M$ and $L |_{\{ 0 \} \oplus N} = L_N$. We define the} dual quantum Gromov-Hausdorff distance \emph{between $(M,L_M)$ and $(N,L_N)$, by setting}
\begin{equation}
{\rm dist}_{qGH^*}((M,L_M), (N,L_N)) := \inf_{L \in \mathcal{L} ((M,L_M), (N,L_N))} {\rm dist}^{L}_H (X_M, X_N),
\end{equation}
\emph{where $X_M$ (resp., $X_N$) denotes the positive part of the unit ball of $2\times2$ matrices on $M$ (resp., $N$)}.
\end{defini}

To simplify the notation, we will drop the indication of the Lip-norms $L_M$, $L_N$ from the set $\mathcal{L}((M,L_M), (N,L_N))$ and from the distance $\dist_{qGH^*}((M,L_M), (N,L_N))$ whenever no ambiguity can arise. In analogy with Lm.~13.6 in \cite{R4}, we  have the following.

\begin{lm}\label{lm:rad}
{\em Let $(M, L_M)$, $(N, L_N)$ be Lip-von Neumann algebras, and let $R_M$, $R_N$ be the respective radii. Then,}
\begin{equation}
| R_M - R_N | \leq {\rm dist}_{qGH^*} (M, N) \leq R_M + R_N.
\end{equation}
\end{lm}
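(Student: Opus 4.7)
The plan is to prove the upper and lower bounds separately, following the standard Gromov--Hausdorff strategy adapted to this dual setting.

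For the \textbf{upper bound} $\dist_{qGH^*}(M,N) \leq R_M + R_N$, I would exhibit the explicit admissible seminorm $L(m,n) := L_M(m) + L_N(n)$ on $M \oplus N$, which lies in $\mathcal{L}$ by construction. After extending $L$ to $\mathcal{M}_2(M \oplus N) = \mathcal{M}_2(M) \oplus \mathcal{M}_2(N)$ via~\eqref{lip2by2}, I would use that $0$ is positive (hence $0 \in X_M \cap X_N$) to bound $\inf_{S \in X_N} L(T-S) \leq L(T, 0) = L_M(T)$ for every $T \in X_M$. Since the entries of a positive contraction are themselves contractions in $M$, one has $L_M(T_{ij}) \leq R_M$ and thus $L_M(T) \leq R_M$ by Lemma~\ref{lm:2by2}. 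The symmetric estimate yields $\dist_H^L(X_M, X_N) \leq \max(R_M, R_N) \leq R_M + R_N$, and hence $\dist_{qGH^*}(M, N) \leq R_M + R_N$.

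For the \textbf{lower bound} $|R_M - R_N| \leq \dist_{qGH^*}(M, N)$, fix any $L \in \mathcal{L}$ and any $T \in X_M$, $S \in X_N$. The restriction properties $L(T,0) = L_M(T)$, $L(0,S) = L_N(S)$ together with the triangle inequality for the $2 \times 2$ extension of $L$ give
$$L_M(T) = L(T, 0) \leq L(T - S) + L(0, S) = L(T-S) + L_N(S),$$
so $L(T - S) \geq L_M(T) - L_N(S)$. Taking the infimum over $S \in X_N$ and then the supremum over $T \in X_M$, followed by symmetry and the infimum over admissible $L$, one obtains $\dist_{qGH^*}(M, N) \geq |\widetilde R_M - \widetilde R_N|$, where $\widetilde R_M := \sup_{T \in X_M} L_M(T)$ (and analogously $\widetilde R_N$).

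The hard part will be identifying $\widetilde R_M$ with the nominal radius $R_M$. The inequality $\widetilde R_M \leq R_M$ is immediate from Lemma~\ref{lm:2by2} since $X_M \subseteq \mathcal{M}_2(M)_1$. For the reverse, given a contraction $a \in M_1$ with $L_M(a)$ close to $R_M$, one must engineer a positive element of $\mathcal{M}_2(M)_1$ whose $L_M$-norm is at least $L_M(a)$. Natural candidates are the Halmos-type positive dilation $\tfrac12 \bigl(I + \bigl(\begin{smallmatrix} 0 & a \\ a^* & 0 \end{smallmatrix}\bigr)\bigr)$ and the rank-one positive matrix $vv^*$ with $v = (1, a^*)^\top$, suitably rescaled. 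Combined with the $*$-invariance of $L_M$ and the max-aggregation in~\eqref{lip2by2}, one of these should recover the full radius $\widetilde R_M = R_M$ and close the proof.
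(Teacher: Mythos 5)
Your treatment of the upper bound is correct and in fact slightly sharper than required: the paper simply observes that \emph{every} admissible $L$ satisfies $L(x,-y)\leq L_M(x)+L_N(y)\leq R_M+R_N$ on $X_M\times X_N$ (via Lemma~\ref{lm:2by2}), while you use the explicit sum seminorm together with $0\in X_M\cap X_N$ to obtain $\max(R_M,R_N)$; both routes are legitimate. Your reduction of the lower bound to $|\widetilde R_M-\widetilde R_N|\leq{\rm dist}_{qGH^*}(M,N)$, with $\widetilde R_M:=\sup_{T\in X_M}L_M(T)$, is also essentially the paper's own argument.

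The genuine gap is exactly the step you flag, and neither of your candidates closes it. Both the Halmos dilation $\tfrac12\bigl(I+\bigl(\begin{smallmatrix}0&a\\a^*&0\end{smallmatrix}\bigr)\bigr)$ and the rescaled $\tfrac12vv^*$ carry $a$ only into an off-diagonal corner, and only as $a/2$, so by homogeneity and \eqref{lip2by2} they yield $\widetilde R_M\geq R_M/2$ and nothing more. Worse, this factor $\tfrac12$ is unavoidable: if $T=\bigl(\begin{smallmatrix}A&B\\B^*&C\end{smallmatrix}\bigr)$ satisfies $0\leq T\leq I$, then for unit vectors $|\langle B\eta,\xi\rangle|^2\leq\min\bigl(\langle A\xi,\xi\rangle\langle C\eta,\eta\rangle,\langle(I-A)\xi,\xi\rangle\langle(I-C)\eta,\eta\rangle\bigr)\leq\tfrac14$, i.e.\ $\|B\|\leq\tfrac12$, so an off-diagonal corner of an element of $X_M$ can never contribute more than $R_M/2$, while the diagonal corners range only over the positive contractions of $M$. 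Since Definition~\ref{def:LipIso} imposes neither $*$-invariance nor any order compatibility on $L_M$, one can in fact have $\widetilde R_M<R_M$ strictly: on $M=\cam_2(\bc)$ the dual-Lip-norm $L_M(x)=\|x\|_2+10|x_{12}|$ (with $\|\cdot\|_2$ the Hilbert--Schmidt norm) has $R_M=\sqrt2+10$, attained at the non-positive unitary $\bigl(\begin{smallmatrix}0&1\\1&0\end{smallmatrix}\bigr)$, whereas $\widetilde R_M\leq\sqrt2+5$. You should be aware that the paper's proof performs the same identification tacitly, passing from $L_M(x)<d+\e+R_N$ for all $x\in X_M$ to $R_M\leq d+R_N$; so you have put your finger on a real soft spot rather than merely failed to reproduce an argument. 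What your method (and the paper's) actually establishes is $|\widetilde R_M-\widetilde R_N|\leq{\rm dist}_{qGH^*}(M,N)\leq\widetilde R_M+\widetilde R_N$; recovering the lemma as written requires either reading the radius as $\sup_{X_M}L_M$ or adding hypotheses on the Lip-norms, not a cleverer dilation.
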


\begin{proof}
Let us prove the second inequality, from which the finiteness of the distance follows. For $L\in \mathcal{L} (M, N)$ we have
$$
L(x, -y) \leq L_M(x) + L_N(y) \leq R_M + R_N, \quad x \in X_M, \, y \in X_N,
$$
where we used the equality of the radii in Lemma \ref{lm:2by2}.
The second inequality directly follows from the definition of Hausdorff distance \eqref{Hmetric}.

Now, let $d = {\rm dist}_{qGH^*} (M, N)$. Given $\e > 0$, we can find an $L \in \cl(M, N)$ s.t. $\dist^{L}_H (X_M, X_N) < d + \e$. Then, for any $x \in X_M$, there is an $y \in X_N$ such that
$$
L_M(x) \leq L(x, -y) + L_N(y) < d + \e + R_N.
$$
Since $\e$ is arbitrary, it follows that
$$
R_M \leq d + R_N.
$$
Reversing the roles of $M$ and $N$, we obtain also
$$
R_N \leq d + R_M,
$$
and the first inequality is proven.
\end{proof}

We need to show that ${\rm dist}_{qGH^*}$ is a metric. It is clearly symmetric in $M$ and $N$. 

\begin{thm}[\textbf{Triangle Inequality}]\label{thm:TI}
\emph{Let $(M_1, L_1)$, $(M_2, L_2)$, and $(M_3, L_3)$ be Lip-von Neumann algebras. Then}
\begin{equation}
{\rm dist}_{qGH^*}(M_1, M_3) \leq {\rm dist}_{qGH^*}(M_1, M_2) + {\rm dist}_{qGH^*}(M_2, M_3).
\end{equation}
\end{thm}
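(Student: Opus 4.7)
The plan is to adapt the classical amalgamation argument for Gromov-Hausdorff distance: glue two near-optimal admissible seminorms together via an infimum over the intermediate algebra. Given $\e>0$, I would first pick $L_{12} \in \cl(M_1,M_2)$ and $L_{23} \in \cl(M_2,M_3)$ with $\dist^{L_{12}}_H(X_{M_1},X_{M_2}) < \dist_{qGH^*}(M_1,M_2)+\e$ and $\dist^{L_{23}}_H(X_{M_2},X_{M_3}) < \dist_{qGH^*}(M_2,M_3)+\e$. The candidate bridging seminorm on $M_1 \oplus M_3$ is
\begin{equation*}
L_{13}(x_1,x_3) := \inf_{x_2 \in M_2}\bigl( L_{12}(x_1,x_2) + L_{23}(-x_2,x_3)\bigr).
\end{equation*}

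The next step is to show $L_{13} \in \cl(M_1,M_3)$. Seminorm properties are verified in the usual way: positive homogeneity by rescaling the parameter $x_2$, and subadditivity by adding near-optimal witnesses from each summand. For the restriction $L_{13}|_{M_1}=L_1$, choosing $x_2=0$ in the infimum yields $L_{13}(x_1,0) \leq L_1(x_1)$, while subadditivity of $L_{12}$ gives $L_1(x_1)=L_{12}(x_1,0) \leq L_{12}(x_1,x_2)+L_{12}(0,-x_2) = L_{12}(x_1,x_2)+L_2(x_2) = L_{12}(x_1,x_2)+L_{23}(-x_2,0)$, so $L_{13}(x_1,0)\geq L_1(x_1)$. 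The restriction $L_{13}|_{M_3}=L_3$ is analogous. (I should note here that $\cl$ as defined in Definition~\ref{def:dqGH} demands only seminorms restricting correctly, so no further compactness check is needed on $L_{13}$ itself.)

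Finally, one estimates the Hausdorff distance between $X_{M_1}$ and $X_{M_3}$ inside $\cam_2(M_1 \oplus M_3)$. Given $x_1 \in X_{M_1}$, pick $x_2 \in X_{M_2}$ with $L_{12}(x_1,-x_2) < \dist_{qGH^*}(M_1,M_2)+2\e$ (extended entrywise to $\cam_2$), and then $x_3 \in X_{M_3}$ with $L_{23}(x_2,-x_3) < \dist_{qGH^*}(M_2,M_3)+2\e$. Since the matrix extension $L^{(2)}(a)=\max_{ij}L(a_{ij})$ satisfies $\max_{ij}(a_{ij}+b_{ij})\leq \max_{ij}a_{ij}+\max_{ij}b_{ij}$, substituting $-x_2$ entrywise into the infimum defining $L_{13}$ yields
\begin{equation*}
L_{13}(x_1,-x_3) \leq L_{12}(x_1,-x_2)+L_{23}(x_2,-x_3) < \dist_{qGH^*}(M_1,M_2)+\dist_{qGH^*}(M_2,M_3)+4\e.
\end{equation*}
Reversing the roles of $M_1$ and $M_3$ gives the symmetric estimate, so $\dist^{L_{13}}_H(X_{M_1},X_{M_3})$ is bounded by the same quantity, and letting $\e \to 0$ completes the proof.

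The main obstacle I anticipate is the bookkeeping around the matrix extension: the positive part $X_{M_i}$ lives in $\cam_2(M_i)$, and I must ensure that the near-optimal intermediate element $x_2$ can be taken inside the positive unit ball $X_{M_2}$ (not merely in $\cam_2(M_2)$) so that the second approximation step to $x_3 \in X_{M_3}$ is available. This is fine because the definition of $\dist_{qGH^*}$ already controls the Hausdorff distance precisely between the positive parts, so the witnesses produced at each stage automatically lie in the correct sets.
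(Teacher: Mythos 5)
Your proposal is correct and follows essentially the same route as the paper: the same amalgamated seminorm $L_{13}$ defined by an infimum over $M_2$ (up to a harmless sign convention in the middle variable), the same verification of the restriction conditions via the choice $x_2=0$ and subadditivity, and the same two-step approximation through $X_{M_2}$ to bound the Hausdorff distance. The only cosmetic difference is that you use near-minimizers with an extra $\e$ of slack where the paper invokes $w^*$-compactness of the positive unit balls to realize exact minimizers; both work.
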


\begin{proof}
Let $1\geq \e > 0$ be given. Then, we can find an $L_{12} \in \mathcal{L} (M_1, M_2)$ s.t.
$$
{\rm dist}^{L_{12}}_H (X_{M_1}, X_{M_2}) \leq {\rm dist}_{qGH^*}(M_1, M_2) + \e/2.
$$
Similarly, we can find $L_{23} \in \mathcal{L} (M_2, M_3)$ s.t.
$$
{\rm dist}^{L_{23}}_H (X_{M_2}, X_{M_3}) \leq {\rm dist}_{qGH^*}(M_2, M_3) + \e/2.
$$
We define 
$$
L_{13}(x_1, x_3) := \inf_{x_2 \in M_2} (L_{12}(x_1, -x_2) + L_{23} (x_2, x_3)).
$$
We shall prove that it is a seminorm, whose restrictions to $M_1$ and $M_3$ are $L_1$ and $L_3$, respectively. Indeed, the positive homogeneity is clear, and we have
\begin{eqnarray*}
& & L_{13}(x_1 + y_1, x_3 + y_3) \\
&=& \inf_{x_2 \in M_2} (L_{12}(x_1+ y_1, -x_2) + L_{23} (x_2, x_3 + y_3)) \\
&=& \inf_{x_2,  y_2 \in M_2} (L_{12}(x_1+ y_1, -(x_2 + y_2)) + L_{23} (x_2 + y_2, x_3 + y_3)) \\
&\leq& \inf_{x_2 , y_2 \in M_2} (L_{12}(x_1, -x_2) + L_{12}(y_1, -y_2) + L_{23} (x_2, x_3) + L_{23} (y_2, y_3)) \\
&=& \inf_{x_2 \in M_2} (L_{12}(x_1, -x_2) + L_{23} (x_2, x_3)) + \inf_{y_2 \in M_2} (L_{12}(y_1, -y_2) + L_{23} (y_2, y_3)) \\
&=& L_{13}(x_1, x_3) + L_{13}(y_1, y_3).
\end{eqnarray*}
Then, let us check the restriction requirement: since $L_{23} (x_2, 0) =  L_2 (x_2) = L_{12}(0, x_2)$, we have
\begin{eqnarray*}
L_{13}(x_1, 0) &=& \inf_{x_2 \in M_2} (L_{12}(x_1, -x_2) + L_{23} (x_2, 0)) \\
&=& \inf_{x_2 \in M_2} (L_{12}(x_1, -x_2) + L_2 (x_2)) \leq L_{12}(x_1, 0) = L_1 (x_1), \\
\end{eqnarray*}
and, since $L_{12}(x_1, -x_2) = L_{12}((x_1, 0) + (0, -x_2)) \geq | L_1 (x_1) - L_2(x_2) |$,
\begin{equation*}
L_{13}(x_1, 0) \geq \inf_{x_2 \in M_2} (| L_1 (x_1) - L_2(x_2) | + L_2 (x_2)) = L_1 (x_1),
\end{equation*}
and so $L_{13}(x_1, 0) = L_1 (x_1)$. Similarly, one shows that $L_{13}(0, x_3) = L_3 (x_3)$.
\\
Now, suppose that ${\rm dist}^{L_{12}}_H (X_{M_1}, X_{M_2}) = d_{12}$ and ${\rm dist}^{L_{23}}_H (X_{M_2}, X_{M_3}) = d_{23}$. By definition of Hausdorff distance and by $w^*$-compactness of the positive part of the unit ball, for any given $x_1 \in X_{M_1}$ we can find an $x_2 \in X_{M_2}$ - call it $f(x_1)$ - s.t. $L_{12} (x_1, -x_2) = L_{12} (x_1, -f(x_1)) \leq d_{12}$, and, analogously, for any $x_2 \in X_{M_2}$, we can find a corresponding $x_3 \in X_{M_3}$ -- call it $g(x_2)$ -- s.t. $L_{23} (x_2, -x_3) = L_{23} (x_2, -g(x_2)) \leq d_{23}$. In other words, for any given $x_1 \in X_{M_1}$, we can find an $x_3 = g(f(x_1)) \in X_{M_3}$, s.t.
\begin{eqnarray*}
L_{13} (x_1, -g(f(x_1))) &\leq& L_{12} (x_1, -f(x_1)) + L_{23} (f(x_1), -g(f(x_1))) \\ 
&\leq& d_{12} + d_{23}.
\end{eqnarray*}
Similarly, for any given $x_3 \in X_{M_3}$, we can find an $x_1 = h(k(x_3)) \in X_{M_1}$, s.t.
\begin{eqnarray*}
L_{13} (h(k(x_3)), -x_3) &\leq& L_{12} (h(k(x_3)), -k(x_3)) + L_{23} (k(x_3), -x_3) \\
&\leq& d_{12} + d_{23}.
\end{eqnarray*}
Since this holds for any $x_1$ in $X_{M_1}$ and for any $x_3$ in $X_{M_3}$, we obtain
\begin{eqnarray*}
{\rm dist}^{L_{13}}_H (X_{M_1}, X_{M_3}) &\leq& {\rm dist}^{L_{12}}_H (X_{M_1}, X_{M_2}) + {\rm dist}^{L_{23}}_H (X_{M_2}, X_{M_3}) \\
&\leq& {\rm dist}_{qGH^*}(M_1, M_2) + {\rm dist}_{qGH^*}(M_2, M_3) + \e.
\end{eqnarray*}
Therefore, taking the infimum on the l.h.s., we obtain
$$
{\rm dist}_{qGH^*}(M_1, M_3) \leq {\rm dist}_{qGH^*}(M_1, M_2) + {\rm dist}_{qGH^*}(M_2, M_3) + \e,
$$
and so, by the arbitrariness of $\e$, the thesis follows.
\end{proof}

Finally, we want to show that, if two Lip-von Neumann algebras have distance ${\rm dist}_{qGH^*}$ equal to zero, then they are isomorphic as LvNA (see Definition \ref{def:LipIso}, and also Definition 2.4 and Thm.~4.1 in \cite{Kerr}). The proof is inspired by Rieffel's proof of the analogous property for the quantum Gromov-Hausdorff distance between compact quantum metric spaces, completed by the $2\times2$ matrix trick of Kerr \cite{Kerr}.


\begin{lm}\label{lm:equi1}
{\em The family $\mathcal{L} (M, N)$ is uniformly $w^*$-equicontinuous on the unit ball of $M\oplus N$.}
\end{lm}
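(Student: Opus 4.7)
The plan is to exploit the subadditivity of the seminorms $L \in \cl(M, N)$ in order to reduce uniform $w^*$-equicontinuity on the unit ball to $w^*$-continuity at the origin, uniformly in $L$. Since $|L(\xi) - L(\eta)| \leq L(\xi - \eta)$ for any seminorm and any $\xi, \eta \in M \oplus N$, it suffices to exhibit, for every $\e > 0$, a single $w^*$-neighborhood $W$ of $0$ in $M \oplus N$ such that $L(\xi) < \e$ for every $L \in \cl(M, N)$ and every $\xi \in W$ of norm at most $2$, the relevant range for differences of unit-ball elements.

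Using the restriction requirement $L|_{M \oplus \{0\}} = L_M$, $L|_{\{0\} \oplus N} = L_N$ from Definition~\ref{def:dqGH} together with subadditivity, for any $\xi = (z, w) \in M \oplus N$ one has
$$
L(z, w) \leq L(z, 0) + L(0, w) = L_M(z) + L_N(w),
$$
yielding a bound that is uniform in $L$ and depends only on the individual dual-Lip-norms $L_M, L_N$.

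It then remains to find $w^*$-open neighborhoods $V_M \ni 0$ in $M$ and $V_N \ni 0$ in $N$ on which $L_M < \e/2$ and $L_N < \e/2$ hold within the norm-balls of radius $2$. This is precisely what the defining property of a dual-Lip-norm provides: since $L_M$ metrizes the $w^*$-topology on the $w^*$-compact set $\{z \in M : \|z\| \leq 2\}$, the relatively $w^*$-open set $\{z : \|z\| \leq 2,\ L_M(z) < \e/2\}$ equals $\{z : \|z\| \leq 2\} \cap V_M$ for some $w^*$-open $V_M \ni 0$ in $M$; similarly one obtains $V_N$. Taking $W := V_M \oplus V_N$, which is $w^*$-open in $M \oplus N$ since the $w^*$-topology on the direct sum is the product of the $w^*$-topologies on the summands, then completes the argument.

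The approach is essentially structural and I do not foresee a substantive obstacle. The only points deserving care are the passage from relative $w^*$-openness inside the norm-ball to an ambient $w^*$-open set in $M$, and the identification of the $w^*$-topology on $M \oplus N$ with the product $w^*$-topology; both are standard point-set facts, and the uniformity in $L$ is automatic because once the neighborhood $W$ is chosen, the upper bound $L(z,w) \leq L_M(z) + L_N(w)$ no longer involves $L$ at all.
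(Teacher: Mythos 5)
Your proof is correct and is essentially the same argument as the paper's: both reduce uniform equicontinuity to the bound $|L(\xi)-L(\eta)|\leq L(\xi-\eta)\leq L_M(z)+L_N(w)$ coming from subadditivity and the restriction conditions, and then use that $L_M$, $L_N$ induce the $w^*$-topology on bounded sets to produce the required $w^*$-neighborhoods. Your extra care about passing from relative $w^*$-openness in the norm ball to an ambient $w^*$-open set is a minor point the paper glosses over, but it does not change the substance.
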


\begin{proof}
For any $\e > 0$, and for any given $x_0 \in M_1$, $y_0 \in N_1$, let
\begin{eqnarray*}
{\mathcal N}(x_0, \e/2) &=& \{ x \in M : L_M (x - x_0) < \e/2 \}, \\
{\mathcal N}(y_0, \e/2) &=& \{ y \in N : \, L_N (y - y_0) < \e/2 \},
\end{eqnarray*}
so that the subset $\mathcal{N}(x_0, y_0; \e/2) := \{ (x, -y) \in M \oplus N: \, x \in \mathcal{N}(x_0, \e/2), y \in {\mathcal N}(y_0, \e/2) \}$ is a $w^*$-neighborhood of $(x_0, -y_0)$.  If $(x, -y) \in \mathcal{N}(x_0, y_0; \e/2)$, then, for any $L \in \mathcal{L} (M, N)$, we have
\begin{eqnarray*}
| L(x, -y) - L(x_0, -y_0)| & \leq & | L(x, -y) - L(x, -y_0) | + \\
& & \quad | L(x, -y_0) - L(x_0, -y_0) | \\
& \leq & L_M (x - x_0) + L_N (y - y_0) < \e,
\end{eqnarray*}
hence, $\mathcal{L} (M, N)$ is uniformly $w^*$-equicontinuous.
\end{proof}

%
%

\begin{lm}\label{lm:unique1}
{\em Let $L \in \mathcal{L} (M, N)$. For each $x \in \cam_2(M)$ there is at most one $y \in \cam_2(N)$ such that $L (x, -y) = 0$, and similarly for each $y \in \cam_2(N)$.}
\end{lm}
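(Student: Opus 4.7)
The plan is to exploit the fact that $L_M$ and $L_N$ are honest norms, not merely seminorms, in order to convert the subadditivity of $L$ into a uniqueness statement. The first step is to extend the given seminorm $L \in \mathcal{L}(M,N)$ from $M \oplus N$ to a seminorm on $\mathcal{M}_2(M) \oplus \mathcal{M}_2(N) \cong \mathcal{M}_2(M \oplus N)$ via exactly the max-of-entries recipe used in the notation preceding Lemma~\ref{lm:2by2}, namely
$$
L\bigl((x_{ij}),(y_{ij})\bigr) := \max_{i,j=1,2} L(x_{ij}, y_{ij}).
$$
A pointwise maximum of seminorms is a seminorm, and the restriction property $L|_{M \oplus \{0\}} = L_M$, $L|_{\{0\} \oplus N} = L_N$ lifts immediately to the matrix level: the restriction of the extended $L$ to $\mathcal{M}_2(M) \oplus \{0\}$ is the $\mathcal{M}_2$-version of $L_M$ (and analogously for $N$), which by Lemma~\ref{lm:2by2} is again a dual-Lip-norm, in particular a norm.

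For the uniqueness argument itself, suppose $y_1, y_2 \in \mathcal{M}_2(N)$ both satisfy $L(x,-y_1) = L(x,-y_2) = 0$ for a given $x \in \mathcal{M}_2(M)$. Then by subadditivity and evenness of the seminorm $L$,
$$
L(0, y_2 - y_1) = L\bigl((x,-y_1) - (x,-y_2)\bigr) \leq L(x,-y_1) + L(x,-y_2) = 0.
$$
By the restriction property just recorded, $L(0, y_2 - y_1) = L_N(y_2 - y_1)$, where the right-hand side is the lifted Lip-norm on $\mathcal{M}_2(N)$. Since $L_N$ is a \emph{norm} by Definition~\ref{def:LipIso}, and its max-of-entries extension is manifestly still a norm, we conclude $y_1 = y_2$. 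The analogous statement, where uniqueness of $x$ given $y$ is asserted, follows by exchanging the roles of $M$ and $N$.

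I do not anticipate any genuine obstacle here: once the extension of $L$ to the $2\times 2$-matrix algebras is set up, the whole argument is a single application of the triangle inequality together with the fact that a dual-Lip-norm is a norm rather than a seminorm. The purpose of invoking $\mathcal{M}_2$ at this stage is only to set the stage for later constructions of the Kerr-type isomorphism from distance-zero data, not to surmount any difficulty in the uniqueness claim itself.
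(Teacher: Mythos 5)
Your proof is correct and follows essentially the same route as the paper: the uniqueness of $y$ is obtained by applying subadditivity to $(x,-y_1)-(x,-y_2)=(0,y_2-y_1)$ and using that the restriction of $L$ to the second summand is the (lifted) norm $L_N$. The only difference is that you make explicit the max-of-entries extension of $L$ to $\mathcal{M}_2(M)\oplus\mathcal{M}_2(N)$, which the paper leaves implicit; this is a harmless and indeed welcome clarification.
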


\begin{proof}
If $L (x, -y) = 0 = L (x, -y')$, then
\begin{eqnarray*}
L_N (y - y') &=& L ( (0, -y) - (0, -y')) = L ( (x, -y) - (x, -y')) \\
&\leq& L (x, -y) + L (x, -y') = 0, 
\end{eqnarray*}
and thus $y' = y$.
\end{proof}

Now, we can prove the following

\begin{thm}\label{thm:dzero}
{\em Let $(M, L_M)$ and $(N, L_N)$ be Lip-von Neumann algebras. If}
$$
\dist_{qGH^*} (M, N) = 0,
$$
{\em then $(M, L_M)$ and $(N, L_N)$ are isomorphic Lip-von Neumann algebras.}
\end{thm}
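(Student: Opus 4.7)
The plan is to adapt Rieffel's strategy for the quantum Gromov-Hausdorff distance, completed by Kerr's $2\times2$-matrix trick. By Definition~\ref{def:dqGH}, I start from a sequence $\{L_n\}\subset\mathcal{L}(M,N)$ with $\dist^{L_n}_H(X_M,X_N)\to 0$, each $L_n$ being implicitly extended from $M\oplus N$ to $\mathcal{M}_2(M)\oplus\mathcal{M}_2(N)$ via the entrywise maximum of~\eqref{lip2by2}. Combining the pointwise bound $L_n(x,y)\le L_M(x)+L_N(y)$ with the uniform $w^*$-equicontinuity of $\{L_n\}$ on bounded sets (Lemma~\ref{lm:equi1}, applied to the $\mathcal{M}_2$-enhancements), I invoke Ascoli--Arzel\`a on the $w^*$-compact unit ball of $\mathcal{M}_2(M)\oplus\mathcal{M}_2(N)$ to extract a uniformly convergent subsequence. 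Extending the limit by positive homogeneity yields a seminorm $L_\infty\in\mathcal{L}(M,N)$ satisfying $\dist^{L_\infty}_H(X_M,X_N)=0$.

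Next, because $X_M$ and $X_N$ are $w^*$-compact, the vanishing Hausdorff distance produces, for each $x\in X_M$, a $y\in X_N$ with $L_\infty(x,-y)=0$, unique by Lemma~\ref{lm:unique1}; setting $\varphi(x):=y$ and swapping the roles of $M$ and $N$ for the inverse, I obtain a bijection $\varphi\colon X_M\to X_N$. Subadditivity and positive homogeneity of $L_\infty$, combined with the uniqueness lemma, force $\varphi$ to be midpoint-affine and positively homogeneous on $X_M$, and the triangle inequality gives $L_N(\varphi(x))=L_M(x)$. A standard argument from $w^*$-compactness of $X_N$ and uniqueness of $\varphi$ shows that $\varphi$ is $w^*$-continuous.

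Extending $\varphi$ by positivity, linearity, and $*$-symmetry, I obtain a $w^*$-continuous, unital, order-preserving, Lip-isometric, $*$-preserving linear bijection $\tilde\varphi\colon\mathcal{M}_2(M)\to\mathcal{M}_2(N)$. The final step is to promote $\tilde\varphi$ to a $*$-algebra isomorphism. Here I invoke Kerr's observation that every product $a^*b$ in $M$ can be read off from the positive matrix $\bigl(\begin{smallmatrix}a^*a & a^*b\\ b^*a & b^*b\end{smallmatrix}\bigr)\in\mathcal{M}_2(M)_+$, so preservation of $\mathcal{M}_2$-positivity together with preservation of the matrix units $e_{11},e_{22}$ forces $\tilde\varphi(a^*b)=\tilde\varphi(a)^*\tilde\varphi(b)$ and hence full multiplicativity. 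Restricting to the $(1,1)$-corner gives the desired isomorphism $M\simeq N$ of Lip-von Neumann algebras.

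The step I expect to be most delicate is this last promotion from an order/Lip-isomorphism to a $*$-algebra isomorphism: a pure order isomorphism of von Neumann algebras is only a Jordan $*$-isomorphism, and the whole reason for replacing unit balls by positive parts of $\mathcal{M}_2$-unit balls in Definition~\ref{def:dqGH} is precisely to rule out any antihomomorphism summand in $\tilde\varphi$. A subsidiary technical point is the $w^*$-continuity of $\tilde\varphi$ on the full matrix algebra, needed to pass from the linear span of positive unit-ball elements to the von Neumann closure; this should follow from $w^*$-continuity of $\varphi$ on $X_M$ together with boundedness for the dual-Lip-norms.
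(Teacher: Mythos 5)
Your overall strategy coincides with the paper's: Ascoli--Arzel\`a applied to the uniformly bounded, $w^*$-equicontinuous family $\{L_n\}$ to produce a limit seminorm $L_0\in\mathcal{L}(M,N)$ with $\dist_H^{L_0}(X_M,X_N)=0$; the uniqueness lemma to obtain a bijection $\varphi\colon X_M\to X_N$; convexity of $L_0$ to make $\varphi$ affine; extension to a unital, positive, Lip-isometric linear bijection $\tilde\varphi$ of $\mathcal{M}_2(M)$ onto $\mathcal{M}_2(N)$ with positive inverse. Up to that point your argument is sound and matches the paper step for step.

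The final step, however, has a gap as stated. You claim that positivity of $\tilde\varphi$ at the $\mathcal{M}_2$ level together with preservation of the diagonal matrix units $e_{11},e_{22}$ forces $\tilde\varphi(a^*b)=\tilde\varphi(a)^*\tilde\varphi(b)$. That is not enough: a unital order isomorphism of $\mathcal{M}_2(M)$ is in general only a Jordan isomorphism, and its antihomomorphic summand, which acts on $\mathcal{M}_2(\mathbb{C})\otimes 1$ by a transposition, still fixes $e_{11}$ and $e_{22}$; only the off-diagonal units detect it. What actually rules out the antihomomorphism part is that $\tilde\varphi$ acts \emph{entrywise}, i.e. $\tilde\varphi=\id_2\otimes\phi$, and this is precisely where the specific form~\eqref{lip2by2} of the lifted seminorm must be used: since $L_0$ on $\mathcal{M}_2(M)\oplus\mathcal{M}_2(N)$ is the entrywise maximum, $L_0(x,-\tilde\varphi(x))=0$ forces $L_0(x_{ij},-\tilde\varphi(x)_{ij})=0$ for each $(i,j)$, and Lemma~\ref{lm:unique1} then shows that the $(i,j)$ entry of $\tilde\varphi(x)$ depends only on $x_{ij}$, through one and the same linear map $\phi$. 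Once $\tilde\varphi=\id_2\otimes\phi$ is established, $\phi$ is a unital order isomorphism which is moreover $2$-positive with $2$-positive inverse, hence a Jordan (Stoermer) isomorphism which, by Choi's result on $2$-positive maps, is an honest ${}^*$-isomorphism --- this is exactly how the paper concludes. The missing idea is small, but the deduction as you wrote it would not go through without it.
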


\begin{proof}
We shall proceed by steps.

{\em Claim 1. There exists a seminorm $L_0 \in \cl(M,N)$ and a unital positive bijective map $\varphi: \cam_2(M) \to \cam_2(N)$ with unital positive inverse $\varphi^{-1}$ s.t. $L_0(x, -\varphi(x)) = 0$ for any $x \in \cam_2(M)$.}

In fact, if $\dist_{qGH^*} (M, N) = 0$, then we can find a sequence $\{L_n \}_{n \in \bn}$ of seminorms in $\cl(M, N)$ s.t.
$$
\dist_H^{L_n} (X_M, X_N) < \frac{1}{n}.
$$
Clearly, the sequence $\{ L_n \}$ is uniformly bounded on the unit ball of $M \oplus N$  by $R_M+ R_N$ (see the proof of Lm.~\ref{lm:rad}), where $R_M$ (resp., $R_N$) is the radius of $(M, L_M)$ (resp., $(N, L_N)$). Since it is also $w^*$-equicontinuous by Lm.~\ref{lm:equi1}, we can apply the Ascoli-Arzel\`a Theorem (see, \emph{e.g.}, \cite{Ru}) to conclude that it admits a uniformly convergent subsequence, which we will still denote by $\{L_n\}$ for simplicity. It is then clear that, by homogeneity, this sequence converges uniformly on bounded subsets, and therefore pointwise everywhere on $M\oplus N$, and we denote by $L_0$ its limit. Then obviously $L_0 \in \mathcal{L}(M,N)$, and for all $\varepsilon > 0$ we can find $n_\varepsilon \in \mathbb{N}$ such that for all $n > n_\varepsilon$
\[
|L_n(x) - L_0(x)| < \varepsilon \|x\|, \qquad \forall\, x \in M\oplus N.
\]
We now observe that the liftings of $L_n$ to $\cam_2(M) \oplus \cam_2(N)$ are again converging to the lifting of $L_0$ to $\cam_2(M) \oplus \cam_2(N)$, uniformly on bounded sets. In fact, given $x:=(x_{ij})\in \cam_2(M) \oplus \cam_2(N)=\cam_2(M\oplus N)$, we have 
\begin{align*}
L_n(x)&-L_0(x)
=\max_{ij}L_n(x_{ij})-\max_{ij}L_0(x_{ij})\\
&=L_n(x_{i_1j_1})-L_0(x_{i_2j_2})\\
&=(L_n(x_{i_1j_1})-L_0(x_{i_1j_1}))+(L_0(x_{i_1j_1})-L_0(x_{i_2j_2}))\\
&\leq\e\|x_{i_1j_1}\|\leq\e\|x\|,
\end{align*}
where $i_1,j_1$ realize the maximum for $L_n(x_{ij})$ and $i_2,j_2$ realize the maximum for $L_0(x_{ij})$. The  inequality $L_n(x)-L_0(x)\geq-\e\|x\|$ is obtained analogously.
We now show that $\dist_H^{L_0} (X_M, X_N)=0$. 
In fact, given $\e > 0$, we can find an $n_{\e}$ such that for all $n \geq n_{\e}$, we have $| L_0 (x,  -y) - L_n (x,  -y) | < \e/2$ for all $x \in X_M$, $y \in X_N$, and thus
\begin{eqnarray*}
L_0 (x,  -y) &\leq& | L_0 (x,  -y) - L_n (x,  -y) | + L_n (x,  -y) \\
&\leq& \e/2 + L_n (x,  -y).
\end{eqnarray*}
Hence, keeping $x \in X_M$ fixed, if we take the infimum over all $y \in X_N$, we obtain, for $n$ sufficiently large (with $1/n < \e/2$),
$$
\inf_{y \in X_N} L_0 (x,  -y) \leq \e/2 + \inf_{y \in X_N} L_n (x,  -y) < \e/2 + \e/2 = \e.
$$
By arbitrariness of $\e$ we get $\inf_{y \in X_N} L_0 (x,  -y)=0$ and, by compactness, the  infimum is actually a minimum, namely for any $x\in X_M$ we find a unique $y:=\varphi(x)\in X_N$ such that $L_0 (x,  -y)=0$, where uniqueness follows by Lm.~\ref{lm:unique1}.
Reversing the role of $M$ and $N$ one sees that such map has an inverse defined on $X_N$, namely $\varphi$ is surjective. This proves $\dist_H^{L_0} (X_M, X_N)=0$.

Notice that $\varphi$ is by construction an isometry w.r.t. the Lip-norms $L_M$, $L_N$, since $|L_M(x)-L_N(\varphi(x))| \leq L_0(x,-\varphi(x)) = 0$. We want to show that $\varphi$ is an affine map. To this aim, let $x_1, x_2 \in X_M$ and let $y_1, y_2$ be the corresponding elements in $X_N$ for which $L_0 (x_i, -y_i) = 0$, $i = 1,2$. Then, for any $t \in [0,1]$, we have
\begin{eqnarray*}
& & L_0 (t x_1 + (1-t) x_2,  -(t y_1 + (1-t) y_2)) \\
& & = L_0 (t (x_1,  -y_1) + (1-t) (x_2,  -y_2)) \\
& & \leq t L_0 (x_1,  -y_1) + (1-t) L_0 (x_2,  -y_2) = 0,
\end{eqnarray*}
and thus
$$
\varphi(t x_1 + (1-t) x_2) = t y_1 + (1-t) y_2 = t \varphi(x_1) + (1-t) \varphi(x_2),
$$
showing that $\varphi$ is affine.

Now, since $\varphi$ is an affine bijective map from $X_M$ onto $X_N$, and $\varphi(0) = 0$ by Lm.~\ref{lm:unique1}, it extends to a bijective\footnote{The surjectivity simply follows by construction. As for the injectivity, let $x_1, x_2 \in \mathcal{M}_2(M)_{+}$, and assume that $\varphi(x_1 - x_2) = 0$; then, $L_0(x_1 - x_2, -\varphi(x_1 - x_2)) \leq L_0(x_1, -\varphi(x_1)) + L_0(x_2, -\varphi(x_2)) = 0$, and thus, as $L_0(x_1 - x_2, 0) = 0 = L_0(0, 0)$, by Lm.~\ref{lm:unique1} we get $x_1 - x_2 = 0$. Finally, let $x, y \in \cam_2(M)_{sa}$; then, since $\varphi(x + i y) = 0$  implies $\varphi(x) = \varphi(y) = 0$, by the first part the claim follows.} linear map from $\mathcal{M}_2(M)$ onto $\mathcal{M}_2(N)$ (we denote it still by $\varphi$), which is automatically positive and unital. In fact, we have
\begin{eqnarray*}
& & 0 \leq x_1 \leq x_2 \leq I_{\mathcal{M}_2(M)} \Rightarrow 0 \leq x_2 - x_1 \leq I_{\mathcal{M}_2(M)} \Rightarrow \\
& & 0 \leq \varphi (x_2 - x_1) \leq I_{\mathcal{M}_2(N)} \Rightarrow \varphi (x_1) \leq \varphi (x_2),
\end{eqnarray*}
and, analogously, $0 \leq y_1 \leq y_2 \leq I_{\mathcal{M}_2(N)}$ implies $\varphi^{-1} (y_1) \leq \varphi^{-1} (y_2)$. Thus, 
\begin{eqnarray*}
& & \varphi^{-1} (y) \leq I_{\mathcal{M}_2(M)} \quad \forall y \in X_N \, \Rightarrow \varphi^{-1} (I_{\mathcal{M}_2(N)}) \leq I_{\mathcal{M}_2(M)} \\
& & \varphi(x) \leq I_{\mathcal{M}_2(N)} \qquad \forall x \in X_M \Rightarrow \varphi (I_{\mathcal{M}_2(M)}) \leq I_{\mathcal{M}_2(N)},
\end{eqnarray*}
{\em i.e.}, $\varphi (I_{\mathcal{M}_2(M)}) = I_{\mathcal{M}_2(N)}$ and $\varphi^{-1} (I_{\mathcal{M}_2(N)}) = I_{\mathcal{M}_2(M)}$. 

Finally, since any $x \in \cam_2(M)_1$ admits a canonical decomposition $x = x_{1,+} - x_{1,-} + i (x_{2,+} - x_{2,-})$ with $x_{\pm,1}, x_{\pm,2} \in \cam_2(M)_{1,+}$, we get
$$
L_0(x, -\varphi(x)) \leq \sum_{i=1,2} \left( L_0(x_{i,+}, -\varphi(x_{i,+})) + L_0(x_{i,-}, -\varphi(x_{i,-})) \right) = 0,
$$
and the claim follows, as $x \in \cam_2(M)$ implies $\frac{x}{\|x\|} \in \cam_2(M)_1$ and clearly $\|x\| \, \varphi(\frac{x}{\|x\|}) = \varphi(x)$.

\vspace{0.5cm}

{\em Claim 2. Let $x \in \cam_2(M)$, $\displaystyle{ x = \left(
\begin{array}{cc}
a_{11} & a_{12} \\ 
a_{21} & a_{22}
\end{array}
\right) }$ with $a_{ij} \in M$, and set $\displaystyle{ \varphi(x) = \left(
\begin{array}{cc}
a'_{11} & a'_{12} \\ 
a'_{21} & a'_{22}
\end{array}
\right) }$ with $\displaystyle{ a'_{ij} = \varphi_{ij} \left( \left(
\begin{array}{cc}
a_{11} & a_{12} \\ 
a_{21} & a_{22}
\end{array}
\right) \right)} \in N$. Then,}
$$
L_0 \left( a_{ij}, -\varphi_{ij} \left( \left(
\begin{array}{cc}
a_{11} & a_{12} \\ 
a_{21} & a_{22}
\end{array}
\right) \right) \right) = 0, \quad i,j = 1,2.
$$

It is evident, as $\max_{ij} (L_0 (a_{ij}, -a'_{ij})) = L_0(x, -\varphi(x)) = 0$.

\vspace{0.5cm}

{\em Claim 3. The map $\varphi$ is of the form $\id_2 \otimes \phi$, with $\phi$ a bijective positive linear map between $M$ and $N$.}

Notice that $L_0(0,-x) = 0$, $x \in N$, implies $x = 0$ by the analog of Lm.~\ref{lm:unique1} with $M$, $N$ replacing $\cam_2(M)$, $\cam_2(N)$. Therefore introducing the matrix units $e_{ij}$, $i,j=1,2$, one sees, thanks to {\em Claim  2}, that $\varphi_{hk}(e_{ij}\otimes a_{ij}) = 0$ if $(h,k) \neq (i,j)$. Moreover, again by  {\em Claim  2}, $\varphi_{ij}(e_{ij}\otimes a)$, $a \in M$, is uniquely determined by
\[
L_0(a,-\varphi_{ij}(e_{ij}\otimes a)) = 0,
\]
and therefore $\phi(a) := \varphi_{ij}(e_{ij}\otimes a)$ is independent of $(i,j)$ and linear. Then one has, by linearity of $\varphi$,
\[
\varphi(x) = \sum_{i,j=1}^2 \varphi(e_{ij}\otimes a_{ij}) = \sum_{i,j=1}^2 e_{ij}\otimes \phi(a_{ij})=(\id_2\otimes\phi)(x),
\]
which easily implies bijectivity and positivity of $\phi$.

\vspace{0.5cm}

As $\phi$ is injective, unital and positive, it is an order-isomorphism onto its image; furthermore, being surjective, it is a $C^*$-homomorphism by Thm.~6.4 in \cite{Sto}. Reversing the roles of $M$ and $N$, we see that also $\phi^{-1}: N \to M$ is a unital, 2-positive bijective $C^*$-homomorphism. Therefore, by a result of Choi (see Corollary 3.2 in \cite{Choi}), $\phi$ is indeed a ${}^*$-isomorphism and there holds
\[
L_N(\phi(x)) = L_N(\varphi(e_{11}\otimes x)) = L_M(e_{11}\otimes x) = L_M(x), \quad x \in M.
\]

The proof is now complete.
\end{proof}

Let us observe that we might define the distance $\dist_{qGH^*}$ in the following equivalent way. Given two Lip-von Neumann algebras $(M, L_M)$, $(N, L_N)$, we consider all the pairs $(R, L_R)$ with $R$ a von Neumann algebra and $L_R$ a seminorm on $R$ such that there exist $2$-positive isometric embeddings 
\begin{eqnarray*}
& & \phi_M: M \to R, \quad L_R(\phi_M(\cdot)) = L_M(\cdot), \\
& & \phi_N: N \to R, \; \quad L_R(\phi_N(\cdot)) = L_N(\cdot).
\end{eqnarray*}
We set $\varphi_M = \id_2 \otimes \phi_M$ and $\varphi_N = \id_2 \otimes \phi_N$, and denote by $\mathcal{L}_{ie} \equiv \mathcal{L}_{ie} (M, N)$ the set of all such triples $(R, \phi_M, \phi_N)$. We then define
\begin{equation*}
{\rm dist}_{qGH^*}^{ie}(M, N) := \inf \{ {\rm dist}^{R}_H (\varphi_M(X_M), \varphi_N(X_N)) : (R, \phi_M, \phi_N)  \in \mathcal{L}_{ie} \},
\end{equation*}
where ``$ie$'' stands for ``isometric embedding''.

\begin{prop}\label{prop:equivdist}
{\em For any pair of Lip-von Neumann algebras $(M, L_M)$, $(N, L_N)$, we have:}
\begin{equation}
{\rm dist}_{qGH^*}(M, N) = {\rm dist}_{qGH^*}^{ie}(M, N).
\end{equation}
\end{prop}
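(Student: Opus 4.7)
The strategy is to establish both inequalities $\dist_{qGH^*}(M,N) \leq \dist_{qGH^*}^{ie}(M,N)$ and $\dist_{qGH^*}^{ie}(M,N) \leq \dist_{qGH^*}(M,N)$, by showing that any seminorm on $M \oplus N$ lying in $\cl(M,N)$ arises from a suitable triple in $\cl_{ie}(M,N)$, and conversely, in such a way that the corresponding Hausdorff distances are preserved.

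For $\dist_{qGH^*} \leq \dist_{qGH^*}^{ie}$, starting from an arbitrary $(R, \phi_M, \phi_N) \in \cl_{ie}$ together with its seminorm $L_R$, I pull back to the seminorm on $M \oplus N$ defined by
\[
\tilde L(a, b) := L_R(\phi_M(a) + \phi_N(b)), \quad a \in M,\ b \in N.
\]
Linearity of $\phi_M, \phi_N$ and the seminorm properties of $L_R$ make $\tilde L$ a seminorm, and the hypotheses $L_R \circ \phi_M = L_M$, $L_R \circ \phi_N = L_N$ yield exactly the restriction conditions $\tilde L \in \cl(M, N)$. Using the identification $\cam_2(M \oplus N) \cong \cam_2(M) \oplus \cam_2(N)$, the definition $\varphi_M = \id_2 \otimes \phi_M$, $\varphi_N = \id_2 \otimes \phi_N$, and the max formula~\eqref{lip2by2}, one computes for $x \in X_M$, $y \in X_N$ that
\[
\tilde L((x_{ij}, -y_{ij})) = \max_{i,j} L_R(\phi_M(x_{ij}) - \phi_N(y_{ij})) = L_R(\varphi_M(x) - \varphi_N(y)),
\]
whence $\dist_H^{\tilde L}(X_M, X_N) = \dist_H^{L_R}(\varphi_M(X_M), \varphi_N(X_N))$. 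Taking the infimum over $\cl_{ie}$ gives the inequality.

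For the reverse direction $\dist_{qGH^*}^{ie} \leq \dist_{qGH^*}$, starting from $L \in \cl(M, N)$, I take $R := M \oplus N$ as a von Neumann algebra equipped with seminorm $L_R := L$, and let $\phi_M, \phi_N$ be the canonical injections $x \mapsto (x, 0)$ and $y \mapsto (0, y)$. These are injective $\ast$-homomorphisms, hence completely positive (in particular $2$-positive) and isometric for the direct-sum norm, while the restriction conditions on $L$ are precisely $L_R \circ \phi_M = L_M$ and $L_R \circ \phi_N = L_N$. Again under $\cam_2(R) \cong \cam_2(M) \oplus \cam_2(N)$ the amplifications $\varphi_M, \varphi_N$ are the canonical inclusions, the max-lifts of $L_R$ and $L$ agree on the images, the Hausdorff distances coincide, and taking infima concludes. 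The only bookkeeping to verify is that the amplification is compatible with both the max-lift of seminorms and the inclusion $X_M \hookrightarrow \cam_2(R)_{1,+}$ (which follows from $2$-positivity); no deeper machinery is needed, and the argument presents no substantial obstacle.
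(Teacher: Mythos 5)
Your proof is correct and follows essentially the same route as the paper: one direction via the canonical triple $(M\oplus N,\iota_M,\iota_N)$ with $L_R:=L$, the other by pulling $L_R$ back to $M\oplus N$ through $(a,b)\mapsto L_R(\phi_M(a)+\phi_N(b))$. The only cosmetic difference is that the paper verifies the resulting Hausdorff-distance comparison by detouring through $R\oplus R$ and the triangle inequality, whereas you observe directly that the pointwise distances, and hence the Hausdorff distances, coincide --- a slight streamlining of the same argument.
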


\begin{proof}
Clearly, ${\rm dist}_{qGH^*}(M, N) \geq {\rm dist}_{qGH^*}^{ie}(M, N)$, since $R = M \oplus N$, with $\phi_M = \iota_M$, $\phi_N = \iota_N$ and $\iota_M, \iota_N$ the canonical embeddings, is just a particular choice, and, on the r.h.s., we take the infimum over all such choices. For the reverse inequality, let $(R, \phi_M, \phi_N) \in \mathcal{L}_{ie}$ be given. We then get a seminorm $L \in \cl(R, R)$ simply by setting $L(x,  y) := L_R(x + y)$. $L$ is clearly a seminorm that restricts to $L_R$ on each summand. We define $L_{M \oplus N}$ as the restriction of $L$ to $\phi_M(M) \oplus \phi_N(N)$. 
\\
Then, since $L_{M \oplus N} \in \cl(\varphi_M(M), \varphi_N(N))$ implies $L_{M \oplus N} \circ (\varphi_M \oplus \varphi_N) \in \cl(M, N)$, we have
\begin{equation*}\begin{split}
{\rm dist}_{qGH^*} (M, N) &\leq {\rm dist}^{L_{M \oplus N}}_H (\varphi_M(X_M), \varphi_N(X_N)) \\
&= {\rm dist}^{(R \oplus R, L)}_H (\varphi_M(X_M) \oplus \{ 0 \}, \{ 0 \} \oplus \varphi_N(X_N)) \\
&\leq {\rm dist}^{(R \oplus R, L)}_H (\varphi_M(X_M) \oplus \{ 0 \}, \{ 0 \} \oplus \varphi_M(X_M)) \\
& \quad + \; {\rm dist}^{(R \oplus R, L)}_H (\{ 0 \} \oplus \varphi_M(X_M), \{ 0 \} \oplus \varphi_N(X_N)) \\
&= {\rm dist}^{R}_H (\varphi_M(X_M), \varphi_N(X_N)),
\end{split}\end{equation*}
as ${\rm dist}^{(R \oplus R, L)}_H (\varphi_M(X_M) \oplus \{ 0 \}, \{ 0 \} \oplus \varphi_M(X_M)) = 0$. By taking the infimum over all the triples $(R, \phi_M, \phi_N)$, we have ${\rm dist}_{qGH^*}(M, N) \leq {\rm dist}_{qGH^*}^{ie}(M, N)$, and the thesis follows.
\end{proof}

\begin{thm}\label{thm:Lic}
{\em ${\rm dist}_{qGH^*}$ is a metric on the space of isomorphism classes of Lip-von Neumann algebras.}
\end{thm}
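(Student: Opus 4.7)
The four metric axioms can be assembled from results already in place, with only one small gap to fill. Finiteness and non-negativity are Lemma~\ref{lm:rad}. Symmetry is immediate: the map $(x,y) \mapsto (y,x)$ sends $\cl(M,N)$ bijectively onto $\cl(N,M)$ and the Hausdorff distance is symmetric. The triangle inequality is Theorem~\ref{thm:TI}. The implication $\dist_{qGH^*}(M,N) = 0 \Rightarrow (M,L_M) \cong (N,L_N)$ as LvNA is Theorem~\ref{thm:dzero}. Thus only two things remain: the reverse implication $(M,L_M) \cong (N,L_N) \Rightarrow \dist_{qGH^*}(M,N) = 0$, and the fact that $\dist_{qGH^*}$ descends to a well-defined function on isomorphism classes.

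For the first item, let $\varphi: M \to N$ be an isometric ${}^*$-isomorphism with $L_N \circ \varphi = L_M$. Define $L$ on $M \oplus N$ by
\[
L(x,y) := L_M\bigl(x + \varphi^{-1}(y)\bigr).
\]
Positive homogeneity and subadditivity follow from those of $L_M$ together with the linearity of $(x,y) \mapsto x + \varphi^{-1}(y)$, so $L$ is a seminorm. Its restrictions give $L(x,0) = L_M(x)$ and $L(0,y) = L_M(\varphi^{-1}(y)) = L_N(y)$, so $L \in \cl(M,N)$. Let $\tilde\varphi = \id_2 \otimes \varphi : \cam_2(M) \to \cam_2(N)$; this is itself a unital ${}^*$-isomorphism, hence restricts to a bijection between the positive unit balls $X_M$ and $X_N$. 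For every $x \in X_M$ we have $L(x,-\tilde\varphi(x)) = L_M(x-x) = 0$, and symmetrically for $y \in X_N$ the element $\tilde\varphi^{-1}(y) \in X_M$ satisfies $L(\tilde\varphi^{-1}(y),-y) = 0$. Hence $\dist_H^L(X_M, X_N) = 0$ and $\dist_{qGH^*}(M,N) = 0$.

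For the second item, suppose $(M,L_M) \cong (M',L_{M'})$ and $(N,L_N) \cong (N',L_{N'})$ as LvNA. By the previous paragraph both $\dist_{qGH^*}(M,M')$ and $\dist_{qGH^*}(N,N')$ vanish, so Theorem~\ref{thm:TI} gives
\[
\dist_{qGH^*}(M,N) \leq \dist_{qGH^*}(M,M') + \dist_{qGH^*}(M',N') + \dist_{qGH^*}(N',N) = \dist_{qGH^*}(M',N'),
\]
and the reverse inequality follows by symmetry. Therefore $\dist_{qGH^*}$ descends to isomorphism classes, on which it satisfies all the axioms of a metric.

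Conceptually, every step is routine given what has already been proved; the only substantive work has already been absorbed into Theorem~\ref{thm:dzero}, where the $2 \times 2$ matrix trick is essential for extracting the multiplicative ${}^*$-structure of the isomorphism from purely metric data. The construction above for the easy direction does not require the $\cam_2$ machinery beyond the observation that $\tilde\varphi$ preserves positive unit balls, which is automatic since $\varphi$ is a ${}^*$-isomorphism.
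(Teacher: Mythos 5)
Your proposal is correct and follows essentially the same strategy as the paper: the only content beyond Lemma~\ref{lm:rad}, Theorem~\ref{thm:TI} and Theorem~\ref{thm:dzero} is the converse implication, which you handle by building a bridging seminorm that vanishes on the graph of the isomorphism. The paper routes this through the isometric-embedding reformulation (Prop.~\ref{prop:equivdist}) with $R=N\oplus N$ and $L_R(y_1,y_2)=L_N(y_1+y_2)$, whereas you write the resulting seminorm $L(x,y)=L_M(x+\varphi^{-1}(y))$ on $M\oplus N$ directly; this is the same construction, merely streamlined, and your explicit check of well-definedness on isomorphism classes is a point the paper leaves implicit.
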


\begin{proof}
By Thm.~\ref{thm:dzero}, we already know that, if ${\rm dist}_{qGH^*} (M, N) = 0$, then $M$ and $N$ are isomorphic as LvNA's.

We show now the reverse implication. Let $\psi: M \to N$ be an isomorphism of LvNA's from  $(M, L_M)$ onto $(N, L_N)$. We set $R := N \oplus N$, $\phi_M := \psi \oplus 0$, $\phi_N := 0 \oplus \iota_N$, where $\iota_N$ is the identity map on $N$, and we define the following seminorm on $R$:
$$
L_R (y_1,y_2) := L_N(y_1 + y_2).
$$
Notice that $L_R(\phi_M(x)) = L_M(x)$ for any $x \in M$, and $L_R(\phi_N(y)) = L_N(y)$ for any $y \in N$. Then, by the previous Proposition, we have
$$
{\rm dist}_{qGH^*}(M, N) \leq {\rm dist}_{H}^R(\varphi_M(X_M), \varphi_N(X_N)).
$$
As ${\rm dist}_{H}^R(\varphi_M(X_M), \varphi_N(X_N)) = 0$, we get the claim.
\end{proof}

\begin{remark}
Let us notice that the distance ${\rm dist}_{qGH^*}$ does not appear to be complete, essentially because we do not have an estimate for the Lip-norm of products of elements, much like in the Rieffel's setting (see \cite{GI}).
\end{remark}

As mentioned briefly at the beginning of the present subsection, if one restricts to the von Neumman algebras acting on a separable Hilbert space $\ch$, the relation between the distance just introduced and the one inducing the Effros-Mar\'echal topology is analogous to the one between the Gromov-Hausdorff and the Hausdorff distance between compact subsets of a metric space. 

More in detail, the distance $\r(x, y) = \sum_{m, n} \frac{1}{2^{m + n}} | (\xi_m, (x - y) \xi_n)|$, $x, y \in \cb(\ch)_1$, with $\{ \xi_n \}_{n \in \bn}$ a dense subset in $\ch_1$, metrizes the $wo$-topology on bounded subsets of $\cb(\ch)$ (see, e.g., \cite{Tak}, Prop.~II.2.7), and therefore it can be used to define $\dist_{EM}$ as in Sec.~\ref{subsec:EMtop}. On the other hand, by translation-invariance (and positive homogeneity), $L_{\ch}(x) := \r(x, 0)$, $x \in \cb(\ch)$, is a dual-Lip-norm on $\cb(\ch)$, and the restriction $L_M := L_{\ch}|_{M}$, $M \subset \cb(\ch)$ a von Neumann algebra, is a dual-Lip-norm on $M$.

It is then clear that one can not expect that convergence in ${\rm dist}_{qGH^*}$ (with respect to these dual-Lip-norms) implies convergence in $\dist_{EM}$ as there can be distinct von Neumann subalgebras of $\cb(\ch)$ which are isomorphic as LvNA's. Conversely, if one defines
\[
\dist^+_{EM}(M_1,M_2) := \dist^\rho_H((M_1)_{1,+},(M_2)_{1,+})
\]
it is not difficult to see that if a sequence $\{M_n\}$ of von Neumman algebras on $\ch$ converges to $M$ w.r.t. $\dist^+_{EM}$ it converges also w.r.t. both $\dist_{EM}$ and $\dist_{qGH^*}$.

\subsection{A class of dual-Lip-norms}
In view of the subsequent application to the study of limits of vNA's associated to (bosonic) free fields, we now specialize to the case when we are given the same von Neumann algebra (as a subalgebra of some $\cb(\ch)$ with $\ch$ separable) endowed with  different dual-Lip-norms of a specific type.

\begin{lm}\label{lm:newdist}
{\em Let $L_1$ and $L_2$ be two dual-Lip-norms on the same von Neumann algebra $M$, and let $J \in \cl((M, L_1), (M, L_2))$. Then,}
$$
{\rm dist}_{qGH^*}((M, L_1), (M, L_2)) \leq \sup_{x \in M, \| x \| = 1} J(x,  -x).
$$
\end{lm}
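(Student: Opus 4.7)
The plan is to use $J$ itself as an admissible seminorm in the infimum defining $\dist_{qGH^*}$ and to bound the resulting Hausdorff distance by the claimed quantity.

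First I would note that, since $M = N$ as von Neumann algebras, the positive parts $X_M$ and $X_N$ of the unit balls of $\cam_2(M)$ and $\cam_2(N)$ appearing in Definition \ref{def:dqGH} literally coincide as subsets of $\cam_2(M)$. Lifting $J$ to $\cam_2(M) \oplus \cam_2(M) = \cam_2(M \oplus M)$ via the max formula \eqref{lip2by2} — which, by Lemma \ref{lm:2by2}, still restricts to the $\cam_2$-lifts of $L_1$ and $L_2$ on the respective diagonal copies of $\cam_2(M)$ and so is an admissible choice in Definition \ref{def:dqGH} — one may select $y := x$ in each of the two inner infima of \eqref{Hmetric} to obtain
\begin{equation*}
\dist_H^J(X_M, X_N) \leq \sup_{x \in X_M} J(x, -x).
\end{equation*}

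Next, for $x = (a_{ij}) \in X_M \subset \cam_2(M)_1$ one has $\|a_{ij}\| \leq \|x\| \leq 1$, hence by the definition of the lift of $J$
\begin{equation*}
J(x, -x) = \max_{i,j} J(a_{ij}, -a_{ij}) \leq \sup_{y \in M,\, \|y\| \leq 1} J(y, -y) = \sup_{y \in M,\, \|y\|=1} J(y, -y),
\end{equation*}
the last equality being positive homogeneity of the seminorm $J$. Combining the two displays with the infimum defining $\dist_{qGH^*}$ yields the claim.

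No real obstacle is anticipated here: the crucial simplification is that both ``sides'' of the Hausdorff distance are the \emph{same} set, so the trivial pairing $y = x$ is always available and reduces the estimate to a scalar quantity depending only on the values of $J$ on the antidiagonal $\{(y, -y) : y \in M,\, \|y\|=1\}$ of $M \oplus M$. The one point of bookkeeping is checking that the max-formula lift of $J$ to $\cam_2$ continues to restrict to the $\cam_2$-lifts of $L_1$ and $L_2$ on the two diagonal copies, which is immediate from the definition of the lift and from Lemma \ref{lm:2by2}.
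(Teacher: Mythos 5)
Your proposal is correct and follows essentially the same route as the paper: take $J$ (lifted entrywise to $\cam_2(M\oplus M)$) as the admissible seminorm, use that $X_M=X_N$ to pair each $x$ with itself in the Hausdorff distance, and then reduce $J(x,-x)=\max_{ij}J(a_{ij},-a_{ij})$ to the supremum over the unit sphere of $M$ via $X_M\subseteq\cam_2(M_1)$ and homogeneity. The only difference is that you spell out explicitly the lifting of $J$ to the $2\times2$ matrix level, which the paper leaves implicit.
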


\begin{proof}
By definition, we have
$$
{\rm dist}_{qGH^*}((M, L_1), (M, L_2)) \leq {\rm dist}_{H}^{J}((M, L_1), (M, L_2)),
$$
and, by definition of Hausdorff distance,
\begin{equation*}\begin{split}
{\rm dist}_{H}^{J}((M, L_1), (M, L_2)) &= \sup_{x \in X_M}\inf_{y \in X_M} J(x,-y) \leq \sup_{x \in X_M} J(x, -x) \\
&= \sup_{(x_{ij}) \in X_M} \max_{ij} J(x_{ij}, -x_{ij}) \\
&\leq \sup_{x \in M, \| x \| = 1} J(x, -x),
\end{split}\end{equation*}
where the last inequality follows from that fact that $X_M   \subseteq \cam_2(M)_1 \subseteq \cam_2(M_1)$.
\end{proof}

\begin{prop}\label{prop:newLN}
{\em Let $M$ be a von Neumann algebra acting on a separable Hilbert space $\ch$ and $\Omega \in \ch$ be a separating vector for $M$. Consider furthermore $T \in \cb(\ch)$ s.t. $\mathrm{Ker} \, T = \{ 0  \}$ and the mapping $M \ni x \mapsto  T x \Omega \in \ch$ is compact. Then, setting $L(x) := \| T x \Omega \|$, $L$ is a dual Lip-norm on $M$. }
\end{prop}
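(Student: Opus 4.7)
The plan is to verify directly the two conditions making $L$ a dual-Lip-norm on $M$ (Definition~\ref{DLS}): $L$ must be a norm on $M$, and the operator-norm closed unit ball $M_1$ must be compact with respect to the topology induced by $L$.

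That $L$ is a seminorm is immediate, being the composition of the linear map $\phi:M\to\ch$, $\phi(x):=Tx\Omega$, with the norm of $\ch$. Non-degeneracy is a short chain of implications: $L(x)=0$ gives $Tx\Omega=0$; since $\ker T=\{0\}$ this forces $x\Omega=0$; and as $\Omega$ is separating for $M$ we conclude $x=0$.

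The substantive step is the compactness of $(M_1,L)$. My plan is to show that the identity $\iota:(M_1,w^*)\to(M_1,L)$ is continuous; since $M_1$ is $w^*$-compact by Banach--Alaoglu and $L$ is a norm (so the target is Hausdorff), $\iota$ would then be a continuous bijection from a compact space to a Hausdorff space, hence a homeomorphism, giving in particular that $M_1$ is $L$-compact. Continuity of $\iota$ is equivalent to the statement that $\phi:(M_1,w^*)\to(\ch,\|\cdot\|)$ is continuous, and this is where the two hypotheses on $T$ must combine; this is the main obstacle.

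I would establish this continuity in two moves. First, for every $\xi\in\ch$, the functional $x\mapsto\langle Tx\Omega,\xi\rangle=\langle x\Omega,T^*\xi\rangle$ is a vector functional on $\cb(\ch)$, hence normal, hence $w^*$-continuous when restricted to $M$; thus $\phi:(M,w^*)\to(\ch,\text{weak})$ is continuous. Second, the compactness hypothesis on $\phi$ means that $K:=\overline{\phi(M_1)}^{\|\cdot\|}\subset\ch$ is norm-compact. On such a $K$ the weak and norm topologies must coincide: both make $K$ compact Hausdorff, the norm topology is finer, and a continuous bijection from a compact space to a Hausdorff space is a homeomorphism. Composing the two steps yields $\phi:(M_1,w^*)\to(K,\|\cdot\|)$ continuous, which is what was needed. (Since $\ch$ is separable, $M_1$ is $w^*$-metrizable and one could equivalently run the argument with sequences, passing to norm-convergent subsequences inside $K$ and invoking uniqueness of weak limits, but the topological formulation above avoids extracting subsequences.)
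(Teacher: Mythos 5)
Your proof is correct, and at bottom it follows the same strategy as the paper --- reduce everything to the image $\{Tx\Omega : x \in M_1\}$ inside $\ch$ --- but you carry out a step that the paper's one-line argument elides. The paper asserts that $(M_1,L)$ is isometric to $(\{Tx\Omega : x\in M_1\}, \|\cdot\|)$, ``which is compact by hypothesis,'' and then appeals to Proposition~\ref{DLSiff}(iv). Strictly speaking, compactness of the mapping $x\mapsto Tx\Omega$ only says that the image of $M_1$ is \emph{relatively} norm-compact; one still needs to know that this image is closed. Your two-step continuity argument supplies exactly that: normality of the vector functionals $x\mapsto\langle x\Omega, T^*\xi\rangle$ gives $w^*$-to-weak continuity of $\phi$, and the coincidence of the weak and norm topologies on the norm-compact set $K=\overline{\phi(M_1)}$ upgrades this to $w^*$-to-norm continuity, so $\phi(M_1)$ is the continuous image of the $w^*$-compact ball and hence actually compact. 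As a further benefit, your homeomorphism $\iota:(M_1,w^*)\to(M_1,L)$ shows directly that $L$ induces the $w^*$-topology on $M_1$, so you do not even need to route through Proposition~\ref{DLSiff}(iv). In short: same skeleton as the paper, but your version is the more complete one.
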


\begin{proof}
$L$ is clearly a seminorm; moreover, $L(x) = 0$ iff $\| T x \Omega \| = 0$ iff $T x \Omega = 0$ iff $x \Omega = 0$ iff $x = 0$, as $\Omega$ is separating for $M$ and $T$ is injective, showing that $L$ is indeed a norm.

Finally, by definition, the space $(M_1,L)$ is omeomorphic to the space $(\{T x \Omega, x\in M_1\},\|\ \|)$, which is compact by hypothesis. The result follows by Proposition \ref{DLSiff}(iv).
%
\end{proof}

\begin{lm}\label{lm:Jdis}
{\em Let $M$ be a von Neumann algebra acting on a separable Hilbert space $\ch$, and let $L_1$, $L_2$ be dual-Lip-norms on $M$ with $L_1 (\cdot) := \| T \cdot \Omega \|$ and $L_2 (\cdot) := \| S \cdot \Omega \|$, and assume that the operators $T$ and $S$ satisfy the same conditions as in the previous Proposition. Then,
\begin{equation}
J(x,  y) := \| T x \Omega +  S y \Omega \|
\end{equation}
belongs to $\cl((M, L_1), (M, L_2))$, and we have
$$
{\rm dist}_{qGH^*}((M, L_1), (M, L_2)) \leq \sup_{x \in M, \| x \| = 1}\| (T - S) x \Omega \|.
$$}
\end{lm}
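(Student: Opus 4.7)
The plan is to verify the two assertions in sequence: first that $J$ belongs to $\cl((M,L_1),(M,L_2))$, i.e., it is a seminorm on $M\oplus M$ restricting to $L_1$ on the first summand and to $L_2$ on the second; and then to obtain the bound on $\dist_{qGH^*}$ by a direct application of Lemma \ref{lm:newdist} with this particular choice of $J$.

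For the first step, I would check the three required properties by straightforward manipulation in the Hilbert space $\ch$. Positive homogeneity is immediate: for any $\lambda\in\bc$,
$$J(\lambda x,\lambda y)=\|\lambda Tx\Omega+\lambda Sy\Omega\|=|\lambda|\,\|Tx\Omega+Sy\Omega\|=|\lambda|\,J(x,y).$$
Subadditivity follows from linearity of $x\mapsto Tx\Omega$, $y\mapsto Sy\Omega$ together with the triangle inequality for $\|\cdot\|$ in $\ch$:
$$J(x_1+x_2,y_1+y_2)\leq \|Tx_1\Omega+Sy_1\Omega\|+\|Tx_2\Omega+Sy_2\Omega\|=J(x_1,y_1)+J(x_2,y_2).$$
Finally, the restriction conditions $J(x,0)=\|Tx\Omega\|=L_1(x)$ and $J(0,y)=\|Sy\Omega\|=L_2(y)$ hold by inspection. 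Note that $\cl((M,L_1),(M,L_2))$ is defined as a set of seminorms (not necessarily norms) with the correct restrictions, so no further non-degeneracy verification is needed for $J$ itself.

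For the second step, I would plug $J$ into Lemma \ref{lm:newdist}. The key observation is that
$$J(x,-x)=\|Tx\Omega+S(-x)\Omega\|=\|(T-S)x\Omega\|,$$
so the lemma immediately yields
$$\dist_{qGH^*}((M,L_1),(M,L_2))\leq\sup_{x\in M,\,\|x\|=1}J(x,-x)=\sup_{x\in M,\,\|x\|=1}\|(T-S)x\Omega\|,$$
which is the claim. I do not expect any real obstacle here: both steps are essentially routine, relying only on the Hilbert-space definition of $J$ and on Lemma \ref{lm:newdist}, which is already available.
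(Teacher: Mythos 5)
Your proof is correct and follows essentially the same route as the paper: verify that $J$ is a seminorm restricting to $L_1$ and $L_2$ on the two summands, then apply Lemma \ref{lm:newdist} using $J(x,-x)=\|(T-S)x\Omega\|$. The paper's own proof is just a terser version of exactly these steps.
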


\begin{proof}
$J$ is clearly a seminorm, and the restrictions of $J$ to the subalgebras $M \oplus \{0\}$ and $\{0\} \oplus M$ coincide with $L_1$ and $L_2$, respectively, implying $J \in \cl((M, L_1), (M, L_2))$. Finally, as $J(x,  -x) = \| (T - S) x \Omega \|$, Lemma \ref{lm:newdist} yields the last statement.
\end{proof}

\section{An application to free quantum fields}
As an application of the theory developed in the previous sections, we  are going to show that the local von Neumann algebras of the free quantum scalar field endowed with suitable dual-Lip-norms depend continuously on the field mass $m$ with respect to the dual quantum Gromov-Hausdorff distance.

For the convenience of the reader, and to fix notations, we recall here briefly the main definitions, see, \emph{e.g.},~\cite[Sec.\ X.7]{RS2} or \cite[Sec.\ 3.5]{Ara} for more details. For an open bounded set $O \subset \br^4$, we denote by $\ca_m(O)$ the local von Neumann algebra of the free quantum scalar field $\phi_m$ of mass $m \geq 0$, \emph{i.e.}, the von Neumann algebra generated by the Weyl operators $W_m(f) = e^{i\phi_m(f)}$, $f \in \cd_\br(O)$ (the real valued, $C^\infty(\br^4)$ functions with support in $O$), acting on the (separable) symmetric Fock space $\ch$ over $L^2(\br^3)$,
\[
\ch := \bc\oplus\bigoplus_{n=1}^{+\infty} L^2(\br^3)^{\otimes_Sn}.
\]

Moreover, we denote by $H_m$ the corresponding hamiltonian operator, \emph{i.e.}, the self-adjoint generator of the strongly continuous one-parameter unitary group obtained through second quantization from the unitary group $t \in \br \mapsto u_m(t)$ on $L^2(\br^3)$ defined by
\[
(u_m(t)\psi)(p) := e^{it\omega_m(p)}\psi(p), \qquad \psi \in L^2(\br^3),
\] 
where $\omega_m(p) := \sqrt{m^2+p^2}$, $p \in \br^3$. The vector $\Omega =(1,0,0,\dots)\in \ch$, called the vacuum vector, is the (up to a phase) unique unit vector invariant for $e^{itH_m}$ for all $t \in \br$, and it is separating for the local algebras $\ca_m(O)$, $m \geq 0$, by the Reeh-Schlieder theorem~\cite[Thm.\ 4.14]{Ara}. Moreover, it is well known that given $O \subset \br^4$ and $\beta > 0$, the map
\[
\Theta_m : \ca_m(O) \to \ch, \qquad \Theta_m(A) := e^{-\beta H_m}A\Omega,
\]
 is compact (and actually nuclear) for all $m \geq 0$~\cite{BP}.

It is also well known~\cite{EF} that for each $O$ and $m > 0$ there exists an isomorphism of von Neumann algebras $\tau_m : \ca_m(O) \to \ca_0(O)$ such that $\tau_m(W_m(f)) = W_0(f)$ for all $f \in \cd_\br(O)$. We also denote by $\tau_0 : \ca_0(O) \to \ca_0(O)$ the identity isomorphism.

The above facts suggest the definition of a natural family of Lip-norms $L_m$ on $\ca_m(O)$, $m \geq 0$.

\begin{prop}\label{prop:freelipnorm}
{\em The map $L_m$, $m \geq 0$, defined by
\begin{align*}
L_m(A) &:= \| e^{-\beta H_m}\tau_m(A)\Omega\|, \quad A \in \ca_m(O),
\end{align*}
is a dual-Lip-norm on $\ca_m(O)$.}
\end{prop}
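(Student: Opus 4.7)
The plan is to reduce the statement to Proposition \ref{prop:newLN}, treating the cases $m=0$ and $m>0$ separately.

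For $m=0$, since $\tau_0$ is the identity, the formula collapses to $L_0(A)=\|e^{-\beta H_0}A\Omega\|$ on $\ca_0(O)$, which is exactly the setup of Proposition \ref{prop:newLN} with $M=\ca_0(O)$, $T=e^{-\beta H_0}$ and $\Omega$ the Fock vacuum. The three hypotheses there are all verified: $\Omega$ separates $\ca_0(O)$ by the Reeh--Schlieder theorem already recalled; $e^{-\beta H_0}$ has trivial kernel by the spectral theorem (since $H_0$ is self-adjoint and $\beta>0$); and the compactness of $A\mapsto e^{-\beta H_0}A\Omega=\Theta_0(A)$ is precisely the $m=0$ instance of the nuclearity result cited from \cite{BP}. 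So $L_0$ is a dual-Lip-norm.

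For $m>0$ I would transport the construction through the $^*$-isomorphism $\tau_m:\ca_m(O)\to\ca_0(O)$. Introduce the auxiliary functional $\tilde L_m(B):=\|e^{-\beta H_m}B\Omega\|$ for $B\in\ca_0(O)$, so that $L_m=\tilde L_m\circ\tau_m$. A second application of Proposition \ref{prop:newLN}, now with $M=\ca_0(O)$ and $T=e^{-\beta H_m}$, shows that $\tilde L_m$ is a dual-Lip-norm on $\ca_0(O)$, \emph{provided} the map $\ca_0(O)\ni B\mapsto e^{-\beta H_m}B\Omega\in\ch$ is compact. Granting that, one concludes by pullback: $\tau_m$ is a normal $^*$-isomorphism, hence $w^*$-bicontinuous and in particular a $w^*$-homeomorphism between the unit balls of $\ca_m(O)$ and $\ca_0(O)$. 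Composition with this injective isometry transports both the norm property and the $w^*$-metrization of the unit ball from $\tilde L_m$ to $L_m$, so $L_m$ is a dual-Lip-norm on $\ca_m(O)$.

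The technical heart of the argument, and the main obstacle, is the compactness of $B\mapsto e^{-\beta H_m}B\Omega$ on $\ca_0(O)$ when $m>0$. This is not verbatim the BW/BP nuclearity recalled in the paper, because there the Hamiltonian and the local algebra share the same mass label. One expects, however, that the same smoothing mechanism giving nuclearity of $\Theta_m$ on $\ca_m(O)$ applies here: the vectors $B\Omega$ with $B$ in the unit ball of $\ca_0(O)$ satisfy enough energy bounds for $e^{-\beta H_m}$ to act as a nuclear operator on them, and the estimates of \cite{BP} should extend to cover this case. With that ingredient in hand, the two cases above complete the proof.
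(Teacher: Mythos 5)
There is a genuine gap at exactly the point you flag as ``the technical heart of the argument'': the compactness of $B \mapsto e^{-\beta H_m}B\Omega$ on $\ca_0(O)$ for $m>0$ is left as a conjecture (``the estimates of \cite{BP} should extend to cover this case''), so the proof is not complete. Everything else in your reduction --- the $m=0$ case via Prop.~\ref{prop:newLN}, the transport through the normal $^*$-isomorphism $\tau_m$ --- matches the paper's argument, but without that compactness statement the second application of Prop.~\ref{prop:newLN} has no foundation, and redoing the Buchholz--Porrmann nuclearity estimates with mismatched masses would be a substantial (and unnecessary) undertaking.

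The missing ingredient is much cheaper than you anticipate, and requires no new energy bounds: since $H_m$ and $H_0$ are both second quantizations of multiplication operators in momentum space, they commute, and $\omega_m(p)\geq\omega_0(p)$ gives
\[
\bigl(e^{-\beta H_m}e^{\beta H_0}\Psi\bigr)_n(p_1,\dots,p_n)
= e^{-\beta\sum_{j=1}^n[\omega_m(p_j)-\omega_0(p_j)]}\,\Psi_n(p_1,\dots,p_n),
\]
whence $\|e^{-\beta H_m}e^{\beta H_0}\|\leq 1$. Therefore $e^{-\beta H_m}B\Omega = \bigl(e^{-\beta H_m}e^{\beta H_0}\bigr)\Theta_0(B)$ exhibits your map as a contraction composed with the compact map $\Theta_0$, hence compact; the same identity also gives $\tilde L_m(B)\leq\|\Theta_0(B)\|$, and the injectivity of $e^{-\beta H_m}$ needed for Prop.~\ref{prop:newLN} follows from the spectral theorem as in your $m=0$ case. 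With this one-line factorization inserted, your argument closes and coincides with the paper's proof.
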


\begin{proof}
Consider first the case $m=0$. Then $L_0$ is a dual-Lip-norm thanks to the separating property of $\Omega$ and to the compactness of $\Theta_{0}$, as follows from Prop.~\ref{prop:newLN}. 

The analogous statement for $L_m$, $m > 0$, is obtained by observing that, for all $\Psi \in \ch$,
\[
(e^{-\beta H_m} e^{\beta H_0}\Psi)_n(p_1,\dots,p_n) = e^{-\beta\sum_{j=1}^n [\omega_m(p_j)-\omega_0(p_j)]}\Psi_n(p_1,\dots,p_n),
\]
which, together with $\omega_m(p) \geq \omega_0(p)$, $p \in \br^3$, implies $\|e^{-\beta H_m} e^{\beta H_0}\| \leq 1$. Therefore
\[
L_m\circ\tau_m^{-1}(A) = \| e^{-\beta H_m}A \Omega\| \leq \|\Theta_0(A)\|,  \qquad A \in \ca_0(O),
\] 
which implies that $A \in \ca_0(A) \mapsto e^{-\beta H_m} A \Omega$ is also compact, and then $L_m\circ\tau_m^{-1}$ is a dual-Lip-norm on $\ca_0(O)$, again by Prop.~\ref{prop:newLN}. Being $\tau_m$ isometric and bi-$w^*$-continuous, we conclude that $L_m$ is a dual-Lip-norm on $\ca_m(O)$.
\end{proof}

In order to prove convergence of $\ca_m(O)$ to $\ca_0(O)$ with respect to the above defined dual-Lip-norms, we note the following general fact.

\begin{lm}\label{lm:uniformcompact}
{\em Let $X$, $Y$ be Banach spaces, $\Theta : X \to Y$ a compact linear map, and $T(s)$, $s > 0$, a uniformly bounded family of operators on $Y$ such that $T(s) \to 0$ strongly as $s \to 0$. Then
\[
\lim_{s\to 0} \sup_{x \in X_1} \|T(s)\Theta(x)\| = 0.
\]}
\end{lm}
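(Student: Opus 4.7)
The plan is to exploit the standard fact that a uniformly bounded strongly convergent family of operators converges uniformly on relatively compact subsets of the domain. Since $\Theta$ is compact, the set $K := \overline{\Theta(X_1)} \subset Y$ is compact, and the statement reduces to showing $\sup_{y \in K} \|T(s) y\| \to 0$ as $s \to 0$.

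First I would set $M := \sup_{s > 0} \|T(s)\| < \infty$, which exists by hypothesis. Given $\varepsilon > 0$, I use total boundedness of $K$ to choose a finite $\varepsilon/(2M)$-net $\{y_1, \dots, y_N\} \subset K$, so that every $y \in K$ satisfies $\|y - y_i\| < \varepsilon/(2M)$ for some $i$. By strong convergence $T(s) y_i \to 0$ for each individual $i$, there exists $s_0 > 0$ such that $\|T(s) y_i\| < \varepsilon/2$ for all $s < s_0$ and all $i = 1, \dots, N$ (since the condition involves only finitely many vectors).

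Then for any $x \in X_1$, pick $y_i$ with $\|\Theta(x) - y_i\| < \varepsilon/(2M)$, and estimate
\[
\|T(s)\Theta(x)\| \leq \|T(s)(\Theta(x) - y_i)\| + \|T(s) y_i\| \leq M \cdot \frac{\varepsilon}{2M} + \frac{\varepsilon}{2} = \varepsilon
\]
for all $s < s_0$. Taking the supremum over $x \in X_1$ and letting $\varepsilon \to 0$ yields the claim.

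There is no real obstacle here; the only subtlety to be careful about is using \emph{uniform} boundedness of the family $T(s)$ to control the error term on the $\varepsilon$-net approximation, and using finiteness of the net so that strong convergence at each net point can be turned into a single choice of $s_0$ valid for all net points simultaneously.
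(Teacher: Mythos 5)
Your proof is correct and follows essentially the same route as the paper's: cover the totally bounded set $\Theta(X_1)$ by a finite $\varepsilon$-net, use strong convergence at the finitely many net points, and control the approximation error via the uniform bound on $\|T(s)\|$ (the paper simply normalizes this bound to $1$ first). No differences worth noting.
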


\begin{proof}
It is not restrictive to assume that $\| T(s)\| \leq 1$ for all $s > 0$. Since $\Theta$ is compact, it maps the unit ball $X_1$ to a totally bounded subset of $Y$, namely for each given $\varepsilon > 0$ there exist finitely many elements $y_1,\dots, y_n \in \Theta(X_1)$ such that
\[
\Theta(X_1) \subset \bigcup_{j=1}^n B_{\varepsilon/2}(y_j),
\]
and for each $j=1,\dots,n$ we can find $\delta_j > 0$ such that $\|T(s)y_j\| < \varepsilon/2$ if $s < \delta_j$. Therefore if $s < \delta := \min_{j}\delta_j$ and $x \in X_1$, given $y_j$ such that $\|\Theta(x)-y_j\| < \varepsilon/2$, one has
\[
\|T(s)\Theta(x)\| \leq \|\Theta(x)-y_j\| + \|T(s)y_j\| < \varepsilon,
\]
and hence the statement.
\end{proof}

The main result of this Section is therefore the following.

\begin{thm}
{\em 
The family of von Neumann algebras $\ca_m(O)$, $m \geq 0$, is continuous with respect to the dual quantum Gromov-Hausdorff distance defined by the dual-Lip-norms $L_m$  defined in Prop.~\ref{prop:freelipnorm}, namely
\[
\lim_{m'\to m }{\rm dist}_{qGH^*}\big(\ca_{m'}(O),\ca_m(O))= 0, \qquad m \geq 0.
\]}
\end{thm}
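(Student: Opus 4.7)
The plan is to use the $\ast$-isomorphisms $\tau_m$ to transport everything to a comparison of two dual-Lip-norms on the same von Neumann algebra $\ca_0(O)$, and then invoke Lemma~\ref{lm:Jdis} together with Lemma~\ref{lm:uniformcompact}. Since $\tau_m:(\ca_m(O),L_m)\to(\ca_0(O),L_m\circ\tau_m^{-1})$ is, by construction, an isomorphism of LvNA's and $\dist_{qGH^*}$ descends to isomorphism classes, one has
\[
\dist_{qGH^*}\big(\ca_{m'}(O),\ca_m(O)\big)=\dist_{qGH^*}\big((\ca_0(O),L_{m'}\circ\tau_{m'}^{-1}),(\ca_0(O),L_m\circ\tau_m^{-1})\big),
\]
and the two dual-Lip-norms on the right are exactly of the form treated in Lemma~\ref{lm:Jdis}: $L_{m'}\circ\tau_{m'}^{-1}(A)=\|e^{-\b H_{m'}}A\Omega\|$ and $L_m\circ\tau_m^{-1}(A)=\|e^{-\b H_m}A\Omega\|$. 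Hence, choosing $T=e^{-\b H_{m'}}$, $S=e^{-\b H_m}$, Lemma~\ref{lm:Jdis} yields
\[
\dist_{qGH^*}\big(\ca_{m'}(O),\ca_m(O)\big)\leq \sup_{A\in\ca_0(O),\,\|A\|=1}\big\|(e^{-\b H_{m'}}-e^{-\b H_m})A\Omega\big\|.
\]

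Next I would rewrite $(e^{-\b H_{m'}}-e^{-\b H_m})A\Omega=T(m')\,\Theta_0(A)$, where $\Theta_0(A)=e^{-\b H_0}A\Omega$ and
\[
T(m'):=\big(e^{-\b H_{m'}}-e^{-\b H_m}\big)e^{\b H_0}=e^{-\b H_{m'}}e^{\b H_0}-e^{-\b H_m}e^{\b H_0}.
\]
The map $\Theta_0:\ca_0(O)\to\ch$ is compact by \cite{BP}, as recalled before Proposition~\ref{prop:freelipnorm}. The family $T(m')$ is uniformly bounded by $2$, since the estimate $\|e^{-\b H_{m''}}e^{\b H_0}\|\leq 1$ used in the proof of Proposition~\ref{prop:freelipnorm} holds for every $m''\geq 0$ by monotonicity of $\omega_{m''}$ in $m''$. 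By Lemma~\ref{lm:uniformcompact} it then suffices to check that $T(m')\to 0$ strongly as $m'\to m$.

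To establish strong convergence, I would use that each of the two summands in $T(m')$ is of the form $\Gamma(e^{-\b(\omega_{m''}-\omega_0)})$, the second quantization of the multiplication operator $e^{-\b(\omega_{m''}-\omega_0)}$ on $L^2(\br^3)$. On the one-particle space, $e^{-\b(\omega_{m'}-\omega_0)}\to e^{-\b(\omega_m-\omega_0)}$ strongly by dominated convergence (the multipliers converge pointwise and are dominated by $1$). This lifts to strong convergence of the second-quantized operators on each $n$-particle sector by the standard telescoping identity for tensor products of contractions, and then to the whole Fock space $\ch$ via an $\varepsilon/3$ argument using the uniform bound $\|\Gamma(e^{-\b(\omega_{m''}-\omega_0)})\|\leq 1$ together with the density of finite-particle vectors. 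Consequently $T(m')\to 0$ strongly, Lemma~\ref{lm:uniformcompact} applies and the supremum above tends to zero, proving the claim.

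The main obstacle is the verification of strong convergence of $T(m')$: everything else is a direct application of results already in place. The delicate point is the passage from strong convergence on the one-particle space to strong convergence of the second-quantized operators, but this is a standard fact for $\Gamma$ restricted to contractions and does not require any regularity in $m$ beyond the pointwise convergence of $\omega_{m'}\to \omega_m$.
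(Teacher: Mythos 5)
Your proposal is correct and follows essentially the same route as the paper: reduction via $\tau_m$ to two dual-Lip-norms on $\ca_0(O)$, the bound from Lemma~\ref{lm:Jdis}, factorization through $\Theta_0$, and Lemma~\ref{lm:uniformcompact}. The only (immaterial) difference is in the strong-convergence step, where the paper applies dominated convergence directly on each $n$-particle sector of $\br^{3n}$, while you pass through one-particle strong convergence and the telescoping estimate for tensor powers of contractions.
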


\begin{proof}
Since $L_m\circ \tau_m^{-1} = \| e^{-\beta H_m}(\cdot)\Omega\|$ is a dual-Lip-norm on $\ca_0(O)$, as observed in the proof of~Prop.~\ref{prop:freelipnorm}, the Lip-von Neumann algebras $(\ca_m(O),L_m)$ and $(\ca_0(O),L_m\circ\tau_m^{-1})$ are isomorphic, and then, by Thm.~\ref{thm:Lic}
\begin{multline*}
{\rm dist}_{qGH^*}\big(\ca_{m'}(O),\ca_m(O)\big)\\
={\rm dist}_{qGH^*}\big((\ca_0(O),L_{m'}\circ\tau_{m'}^{-1}),(\ca_0(O),L_{m}\circ\tau_{m}^{-1})\big).
\end{multline*}
Moreover, setting
\[
J(A, B) := \|e^{-\beta H_{m'}}A\Omega + e^{-\beta H_m} A \Omega\|, \qquad (A, B) \in \ca_0(O)\oplus\ca_0(O),
\]
by Lm.~\ref{lm:Jdis}, $J$ is a seminorm on $\ca_0(O)\oplus\ca_0(O)$ which restricts to $L_{m'}\circ \tau_{m'}^{-1}$ and $L_m\circ \tau_m^{-1}$, and therefore, again by Lemma \ref{lm:Jdis},
\begin{multline*}
 {\rm dist}_{qGH^*}\big(\ca_{m'}(O),\ca_m(O)) \leq \sup_{A \in \ca_0(O)_1}\|(e^{-\beta H_{m'}}-e^{-\beta H_{m}}) A \Omega\|\\
 =\sup_{A \in \ca_0(O)_1}\|(e^{-\beta H_{m'}}e^{\beta H_0}-e^{-\beta H_{m}}e^{\beta H_0}) \Theta_0(A)\|.
\end{multline*}
Thanks to Lm.~\ref{lm:uniformcompact}, the statement is then achieved if we can show that $e^{-\beta H_{m'}}e^{\beta H_0}-e^{-\beta H_{m}}e^{\beta H_0} \to 0$ strongly as $m' \to m$. In order to do that, take $\Psi_n \in \ch$, a vector whose only non-vanishing component lies in $L^2(\br^3)^{\otimes_Sn}$. Then
\begin{multline*}
\|(e^{-\beta H_{m'}}e^{\beta H_0}-e^{-\beta H_{m}}e^{\beta H_0})\Psi_n\|^2 \\
= \int_{\br^{3n}}dp_1\dots dp_n \big|\big(e^{-\beta\sum_{j=1}^n [\omega_{m'}(p_j)-\omega_0(p_j)]}\\
-e^{-\beta\sum_{j=1}^n [\omega_{m}(p_j)-\omega_0(p_j)]}\big)\Psi_n(p_1,\dots,p_n)\big|^2,
\end{multline*}
and an application of the dominated convergence theorem shows that $\|(e^{-\beta H_{m'}}e^{\beta H_0}-e^{-\beta H_{m}}e^{\beta H_0})\Psi_n\| \to 0$. The required strong convergence is then obtained by observing that vectors of the form $\Psi_n$ span a dense subspace of $\ch$, and thanks to the uniform boundedness, in $m' \geq 0$, of $\|e^{-\beta H_{m'}}e^{\beta H_0}-e^{-\beta H_{m}}e^{\beta H_0}\|$.
\end{proof}

It is an interesting open question wether an analogous result holds with respect to other natural Lip-norms, such as, e.g., $A \in \ca_m(O) \mapsto \|e^{-\beta H_m}A \Omega\|$, $m \geq 0$.


\medskip
\emph{Acknowledgements.} One of the authors (D.~G.) wishes to thank R.~Verch and H.~Bostelmann for enlightening discussions at a very early stage of this work.

We  thank the anonymous referee for a very careful reading of the manuscript, and the suggestion of many  changes which improved the quality of the paper.

This work was supported by the following institutions:
the ERC for the Advanced Grant 227458 OACFT ``Operator Algebras and Conformal Field Theory'', the MIUR PRIN ``Operator Algebras, Noncommutative Geometry and Applications'', the INdAM-CNRS GREFI GENCO, and the INdAM GNAMPA.

\end{document}